\documentclass[11pt,reqno]{amsart}

\usepackage[margin=1in]{geometry}
\usepackage{amsmath}
\usepackage{amssymb}
\usepackage{amsthm}
\usepackage{dsfont}
\usepackage{graphicx} %
\usepackage{microtype}

\usepackage{tikz}
\usetikzlibrary{shapes.geometric, positioning, fit}

\usepackage{charter}
\usepackage[cal=euler]{mathalpha}

\usepackage{xcolor}
\usepackage[backref=page]{hyperref}
\hypersetup{
    colorlinks=true,
    linkcolor=cyan!80!black,
    citecolor=cyan!80!black,
    urlcolor=magenta,
}

\usepackage{booktabs}
\usepackage{makecell}
\usepackage[symbol]{footmisc}

\allowdisplaybreaks

\newcommand{\bbE}{\mathbb{E}}
\newcommand{\bbC}{\mathbb{C}}

\newcommand{\bbP}{\mathbb{P}}

\newcommand{\bbZ}{\mathbb{Z}}
\newcommand{\bbI}{\mathbb{I}}

\newcommand{\cF}{\mathcal{F}}

\newcommand{\cS}{\mathcal{S}}

\newcommand{\Unif}{\mathsf{Uniform}}

\newcommand{\Cay}{\mathrm{Cay}}

\newcommand{\gausshyp}[4]{%
  {}_2F_1 \left[ 
    \begin{array}{c} 
      #1, \; #2 \\ 
      #3 
    \end{array} ;\, #4 
  \right]%
}

\newcommand{\convas}{\xrightarrow{\text{a.s.}}}

\newcommand{\convd}{\xrightarrow{d}}

\let\tilde\widetilde

\theoremstyle{plain}
\newtheorem{theorem}{Theorem}[section]
\newtheorem{corollary}{Corollary}[section]
\newtheorem{proposition}{Proposition}[section]
\newtheorem{lemma}{Lemma}[section]

\theoremstyle{definition}

\theoremstyle{remark}
\newtheorem{remark}{Remark}[section]

\title[Elephant random walk on $\mathbb{Z}_2 * \mathbb{Z}_2$]{Elephant random walk on the infinite dihedral group $\mathbb{Z}_2 * \mathbb{Z}_2$}
\author[S. S. Mukherjee]{Soumendu Sundar Mukherjee}
\author[H. Talukdar]{Himasish Talukdar}
\address{
    Statistics and Mathematics Unit \\
    Indian Statistical Institute \\
    203 B.T. Road, Kolkata 700108 \\
    West Bengal, India.
}
\email{ssmukherjee@isical.ac.in}
\email{talukdar.himasish@gmail.com}

\begin{document}

\begin{abstract}
Elephant random walks were studied recently in \cite{mukherjee2025elephant} on the groups $\mathbb{Z}^{*d_1} * \mathbb{Z}_2^{*d_2}$ whose Cayley graphs are infinite $d$-regular trees with $d = 2d_1 + d_2$. It was found that for $d \ge 3$, the elephant walk is ballistic with the same asymptotic speed $\frac{d - 2}{d}$ as the simple random walk and the memory parameter appears only in the rate of convergence to the limiting speed. In the $d = 2$ case, there are two such groups, both having the bi-infinite path as their Cayley graph. For $(d_1, d_2) = (1, 0)$, the walk is the usual elephant random walk on $\bbZ$, which exhibits anomalous diffusion. In this article, we study the other case, namely $(d_1, d_2) = (0, 2)$, which corresponds to the infinite dihedral group $D_\infty \cong \mathbb{Z}_2 * \mathbb{Z}_2$. Unlike the classical ERW on $\mathbb{Z}$, which is a time-inhomogeneous Markov chain, the ERW on $D_{\infty}$ is non-Markovian. We show that the first and second order behaviours of the \emph{signed location} of the walker agree with those of the simple symmetric random walk on $\bbZ$, with the memory parameter essentially manifesting itself via a lower order correction term that can be written as an explicit functional of the elephant walk on $\mathbb{Z}$. Our result demonstrates that unlike the simple random walk, the elephant walk is sensitive to local algebraic relations. Indeed, although $D_{\infty}$ is virtually abelian, containing $\bbZ$ as a finite-index subgroup, the involutive nature of its generators effectively neutralises memory, thereby ruling out any potential superdiffusive behaviour, in contrast to the superdiffusion observed on its abelian cousin $\bbZ$.

\end{abstract}

\keywords{Elephant random walk; infinite dihedral group, P\'{o}lya's urn; reinforced random walks}
\subjclass[2020]{60G50, 82C41, 60K99, 60G42}

\maketitle
\thispagestyle{empty}

\section{Introduction}
The Elephant Random Walk (ERW) was introduced by Sch\"{u}tz and Trimper \cite{schutz2004elephants} as a reinforced random walk on $\bbZ$ with a complete memory of its past. At every step, the walker/elephant chooses to replicate a randomly selected previous step with probability $p$ (the memory parameter), or to move in the opposite direction with probability $1-p$. The ERW on $\bbZ$ is known to exhibit anomalous diffusion \cite{schutz2004elephants, baur2016elephant} and undergoes a phase transition: for $p < 3/4$, the walk is diffusive, while for $p \ge 3/4$, it becomes superdiffusive.

ERWs have also been studied on other infinite graphs, e.g., on $\bbZ^d$ \cite{bercu2019multi, bertenghi2022functional, qin2025recurrence, curien2024recurrence}, on triangular, hexagonal and brickwall lattices \cite{shibata2025functional}, on coverings of dipole graphs \cite{naganuma2026elephant}. Recently, in \cite{mukherjee2025elephant}, the first author generalised the ERW model to finitely generated groups. That work focused on groups whose Cayley graphs are infinite $d$-regular trees with $d \ge 3$, such as the free product $\bbZ^{* d_1} * \bbZ_2^{* d_2}$ (where $d = 2d_1 + d_2$), and it was shown that the walk is ballistic with a speed of $(d - 2)/d$, regardless of the memory parameter $p$. The rate of convergence to this limiting speed appears to depend on $p$, undergoing a phase transition at $p_d = \frac{d + 1}{2d}$.

There are exactly two such groups in the boundary case of $d = 2$: the abelian group $\mathbb{Z}$ (corresponding to $(d_1, d_2) = (1, 0)$) and the infinite dihedral group\footnote[2]{The infinite dihedral group also arises naturally as the semi-direct product $\bbZ \rtimes \bbZ_2$ and serves as the isometry group of $\mathbb{Z}$. Here, however, we shall work with the $\bbZ_2 * \bbZ_2$ model.} $D_\infty \cong \mathbb{Z}_2 * \mathbb{Z}_2$ (corresponding to $(d_1, d_2) = (0, 2)$). In this article, we study the ERW on $D_{\infty}$. While the Cayley graph of $D_\infty$ is the bi-infinite path (an infinite $2$-regular tree), identical to that of $\bbZ$, the different algebraic structure of $D_{\infty}$ leads to a different dynamics. This further motivates a direct comparison with the Simple Symmetric Random Walk (SSRW) on $\bbZ$, which coincides with the ERW at $p = \tfrac{1}{2}$ on both $\bbZ$ and $D_\infty$.

The analysis of the ERW on $\mathbb{Z}_2 * \mathbb{Z}_2$ requires a different approach than the one used in \cite{mukherjee2025elephant}. We utilise the group presentation (two generators $a$ and $b$, both involutions) to describe the process as a certain reinforced random walk on $\bbZ$, which admits a natural coupling with the ERW on $\bbZ$, and can be analysed via martingale techniques to obtain precise limit theorems.

On $d$-regular trees with $d \ge 3$, geometry dictates the first-order behaviour: the exponential growth of the tree pushes the walker outward at the same speed as a simple random walk, regardless of the memory parameter $p$. One might expect that on the bi-infinite path ($d = 2$), the lack of geometric branching could allow the memory parameter to dominate, as seen in the superdiffusive phase of the ERW on $\bbZ$. However, our main result is that the ERW on $D_\infty$ behaves asymptotically like the SSRW on $\bbZ$. Specifically, the effect of the memory parameter $p$ is negligible at first and second orders, manifesting only in a lower-order correction term. Interestingly, this correction term can be described as an explicit functional of the ERW on $\bbZ$.

This highlights a fundamental difference between random walks with memory and ordinary random walks. Random walks on \emph{virtually abelian} infinite groups (such as $D_\infty$) typically mirror the asymptotic behaviour of those on their finite-index abelian subgroups. However, the ERW, which is arguably the simplest and most natural model of a random walk with full memory, is highly sensitive to the underlying group relations. In the abelian case of $\mathbb{Z}$, memory reinforces translational momentum, leading to the superdiffusive regime when it is sufficiently strong ($p \ge 3/4$). On the other hand, since the generators of $\bbZ_2 * \bbZ_2$ are involutions ($a^2 = b^2 =e$), a remembered step often results in an immediate backtrack rather than a continuation of momentum. This ``reflective'' mechanism essentially \emph{stalls} the accumulation of drift, causing the memory effect to cancel out at the first and second orders.

The rest of the paper is organised as follows. In Section~\ref{sec:model}, we describe the model and state our results. Section~\ref{sec:proofs} contains the proofs. Appendix~\ref{sec:aux} contains the proof of an auxiliary complex analytic lemma.

\section{The model and our main results}\label{sec:model}
For a finitely generated group $G$, with a symmetric\footnote[3]{The generating set $\cS$ being symmetric entails that $s \in \cS$ if and only if $s^{-1} \in \cS$.} generating set $\cS$, recall that the (left-) Cayley graph $\Cay(G; \cS)$ of $G$ with respect to $\cS$ is a $|\cS|$-regular graph with vertex set $G$ and an edge between vertices $x, y \in G$ if and only if $yx^{-1} \in \cS$.
Suppose $D_{\infty}$ is the infinite dihedral group presented as 
\[
    D_{\infty} = \langle a,b \mid a^2=e,\, b^2=e\rangle,
\]
where $e$ denotes the identity element. It is easy to see that $\Cay(D_{\infty}; \{a, b\})$ is the bi-infinite path depicted in Figure~\ref{fig:cayley_graph}.
\begin{figure}[!t]
    \centering
    \begin{tikzpicture}[scale=1.2,
    every node/.style={font=\small},
    vertex/.style={circle, draw, minimum size=6mm}
]

\foreach \x/\label in {
    -3/bab,
    -2/ab,
    -1/b,
     0/e,
     1/a,
     2/ba,
     3/aba
}{
    \node[vertex] (v\x) at (\x,0) {};
    \node[above] at (v\x.north) {\(\label\)};
    \node[below] at (v\x.south) {\(\x\)};
}

\foreach \x in {-3,-2,-1,0,1,2}{
    \draw (v\x) -- (v\the\numexpr\x+1\relax);
}

\draw (-4,0) -- (v-3);
\draw (v3) -- (4,0);

\node at (-4.5,0) {\(\cdots\)};
\node at (4.5,0) {\(\cdots\)};

\end{tikzpicture}
    \caption{The (left-) Cayley graph of $D_{\infty}$ with respect to the generators $\{a, b\}$. The integer below a node denote its signed location.}
    \label{fig:cayley_graph}
\end{figure}
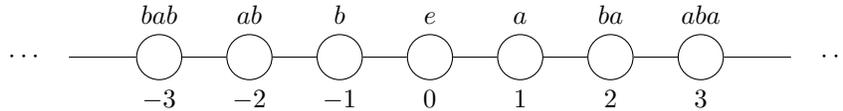

Following \cite{mukherjee2025elephant}, we now define an ERW $(w_n)_{n \ge 0}$ on $D_{\infty}$ (or on $\Cay(D_{\infty}; \{a, b\})$) with steps $(g_n)_{n \ge 0} \in \{a, b\}$. Define $w_0=e$. The first step $g_1$ is chosen uniformly from $\{a, b\}$. Let $\cF_n:= \sigma(w_0, g_1, \ldots, g_n)$ be the natural filtration of $w_0, g_1, \ldots, g_n$. Fix the \emph{memory parameter} $p \in [0,1]$.  Given $\cF_n$, we first pick $I_{n + 1} \sim \Unif(\{1,2, \ldots, n\})$ and define
\begin{equation}\label{eqn:g_n_definition}
    g_{n+1}=
    \begin{cases}
        g_{I_{n + 1}} & \text{with probability } p,\\
        {g}^{(c)}_{I_{n + 1}} & \text{with probability } 1-p,
    \end{cases}
\end{equation} 
where $a^{(c)} := b$ and $b^{(c)} := a$. We denote by $w_n$ the position of the walker at time $n$, which is encoded by the \emph{reduction} of the element $g_n \cdots g_1$ (as per the relations $a^2 = b^2 = e$).

One can alternatively specify the distribution of $(g_n)_{n\ge 1}$ as follows. Let $A_n$ and $B_n$ denote respectively the number of $a$-steps and $b$-steps till time $n$. Given $\cF_n$, we see that
\begin{equation}\label{eqn:g_n_definition_alter}
    g_{n+1} = 
    \begin{cases}
        a &\text{with probability } \frac{p A_n + (1-p) B_n}{n},\\
        b &\text{with probability } \frac{p B_n + (1-p) A_n}{n}.
    \end{cases}
\end{equation}
Let $\rho$ denote the \emph{word metric} (equivalently, the graph distance on the (left-) Cayley graph): for two group elements $x, y \in D_{\infty}$, $\rho(x, y)$ gives the length of the reduced word $yx^{-1}$ (i.e. the number of letters in it). We let $\Delta_{n} := \rho(w_n, e)$ denote the distance of the walker from $e$ at time $n$.

We also define the \emph{signed location} $\Delta_n^{(s)}$ of the walker as follows:
\begin{equation}\label{eq:signed-location}
    \Delta_{n}^{(s)} := \begin{cases}
        \Delta_n & \text{if } \rho(w_n, a) \le \rho(w_n, b), \\
        -\Delta_n & \text{otherwise.}
    \end{cases}
\end{equation}
In other words, we mark the branch of the Cayley tree (rooted at $e$) containing $a$ as positive (see Figure~\ref{fig:cayley_graph}).

In general, $(\Delta_n^{(s)})_{n\ge 0}$ is a \emph{non-Markovian process}. In fact, as we shall see in Remark~\ref{rem:non-Markov} below,
\[
    \bbP(\Delta^{(s)}_{n + 1} = \Delta^{(s)}_{n} \pm 1 \mid \cF_{n}) = \frac{1}{2} \pm (-1)^n \frac{2p - 1}{2n} \left[(-1)^{n - 1} \Delta^{(s)}_n + 2 \sum_{k = 1}^{n - 1} (-1)^{k - 1} \Delta^{(s)}_k\right].
\]
Further, for $p = 1/2$, $(\Delta_n^{(s)})_{n \ge 0}$ has the same distribution as the simple symmetric random walk (SSRW) on $\bbZ$.

\begin{remark}
If $p = 1$, the elephant always repeats its first step. As such, $\Delta_n$ alternates between $0$ and $1$.
\end{remark}

We note here that the process $(W_n)_{n \ge 0}$ given by 
\[
    W_n = A_n - B_n
\]
is the usual ERW on $\bbZ$ (indeed, this is the connection with two colour P\'olya urn models noticed by \cite{baur2016elephant}).

Our main result is the following explicit Doob decomposition of the process $(\Delta_{n}^{(s)})_{n\ge 0}$, which shows that the memory parameter essentially manifests itself via a lower order correction term, which is a functional of $(W_n)_{n \ge 0}$, the ERW on $\bbZ$.
\begin{theorem}\label{thm:main}
There exists a zero-mean martingale $(\Xi_n)_{n \ge 0}$ adapted to $(\cF_{n})_{n \ge 0}$, with bounded increments and its predictable quadratic variation $\langle \Xi \rangle_n$ satisfying $\frac{\langle \Xi \rangle_n}{n} \convas 1$, such that
\begin{equation}\label{eq:doob-decomp}
    \Delta_{n}^{(s)} = \Xi_n + q \sum_{k = 1}^{n - 1} (-1)^k \frac{W_k}{k},
\end{equation}
where $q = 2p - 1 \in [-1, 1)$. The predictable process $(Z_n)_{n \ge 0}$, where $Z_n := \Delta_{n}^{(s)} - \Xi_n$, converges almost surely and in $L^2$ to the zero-mean random variable
\[
    Z_{\infty} := q \sum_{k = 1}^{\infty} (-1)^k \frac{W_k}{k},
\]
with variance
\begin{equation}\label{eq:var_Z_infty}
    \bbE[Z_{\infty}^2] = \begin{cases} \frac{q^2}{2q-1}
    \int_0^1\frac{u^{-2q + 1} - 1}{1 - u} \cdot \frac{(1 - \frac{u}{2})^{q - 1} - 1}{u} \, du & \text{if } q \ne \frac{1}{2}, \\[5pt]
    \frac{1}{4} \int_0^1 \frac{-\ln u}{1-u}\cdot\frac{\left(1-\frac{u}{2}\right)^{-1/2}-1}{u}\,du & \text{if } q = \frac{1}{2}.
    \end{cases}
\end{equation}
\end{theorem}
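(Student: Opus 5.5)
The whole statement should follow from one observation about how $a$ and $b$ act on the bi-infinite path in Figure~\ref{fig:cayley_graph}. Put $\varepsilon_k := +1$ if $g_k = a$ and $\varepsilon_k := -1$ if $g_k = b$, so that $W_n = \sum_{k \le n} \varepsilon_k$. From the vertex labels one checks that at an even signed location the generator $a$ moves the walker to the right, and at an odd signed location $b$ does. Since $\Delta^{(s)}_n \equiv n \pmod 2$, this gives the exact increment identity
\[
  \Delta^{(s)}_{n+1} - \Delta^{(s)}_n = (-1)^n \varepsilon_{n+1}.
\]
By \eqref{eqn:g_n_definition_alter}, $\bbE[\varepsilon_{n+1} \mid \cF_n] = \frac{pA_n + (1-p)B_n - pB_n - (1-p)A_n}{n} = q W_n / n$ for $n \ge 1$, while $\bbE[\varepsilon_1] = 0$. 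Set
\[
  \Xi_n := \sum_{k=0}^{n-1} (-1)^k \bigl(\varepsilon_{k+1} - \bbE[\varepsilon_{k+1} \mid \cF_k]\bigr).
\]
Then \eqref{eq:doob-decomp} holds by construction, $\Xi$ is a zero-mean martingale, and its increments are bounded by $2$. Its predictable quadratic variation is
\[
  \langle \Xi \rangle_n = 1 + \sum_{k=1}^{n-1} \bigl(1 - q^2 W_k^2 / k^2\bigr).
\]
So $\langle \Xi \rangle_n / n \to 1$ a.s. as soon as $W_k / k \to 0$ a.s. This is the standard strong law for the ERW on $\bbZ$ when $q < 1$ (e.g.\ via the P\'olya-urn/martingale analysis of \cite{baur2016elephant}). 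The expression in Remark~\ref{rem:non-Markov} also comes out of this computation after summing by parts.

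Next I will show that $Z_n = q \sum_{k=1}^{n-1} (-1)^k W_k / k$ converges. Group the terms in pairs $(2m-1, 2m)$:
\[
  \frac{W_{2m}}{2m} - \frac{W_{2m-1}}{2m-1} = \frac{\varepsilon_{2m}}{2m} - \frac{W_{2m-1}}{2m(2m-1)}.
\]
The second piece is absolutely summable a.s., because $|W_k| = O(k^{\max(1/2, q)} \log k)$ a.s. with $q < 1$ (from the known ERW growth rates, or simply $\bbE W_k^2 = O(k^{2\max(1/2,q)} \log k)$ combined with Borel--Cantelli along a sparse subsequence). For the first piece, write $\varepsilon_{2m} = q W_{2m-1}/(2m-1) + \text{martingale difference}$. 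The drift part is again absolutely summable. The martingale $\sum_m (\text{mart.\ diff.})/(2m)$ is bounded in $L^2$, so it converges a.s.\ and in $L^2$. The leftover unpaired term tends to $0$. Repeating the same estimates in $L^2$ norm, using the explicit second moments of $W_k$, gives $L^2$ convergence of $Z_n$. Finally $\bbE Z_\infty = 0$, since $\bbE W_k = 0$ for all $k$ by symmetry.

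The main obstacle is the closed form \eqref{eq:var_Z_infty}. I will start from the classical exact moments of the ERW:
\[
  \bbE[W_k W_j] = \frac{\Gamma(k+q)\,\Gamma(j)}{\Gamma(k)\,\Gamma(j+q)}\, \bbE[W_j^2] \quad (k \ge j), \qquad \bbE[W_j^2] = \frac{j}{2q-1}\left(\frac{\Gamma(j+2q)}{\Gamma(j+1)\,\Gamma(2q)} - 1\right).
\]
These give
\[
  \bbE Z_\infty^2 = q^2 \sum_{j,k} (-1)^{j+k} \frac{\bbE[W_j W_k]}{jk}.
\]
I will then write the Gamma ratios as Beta integrals. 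For instance, $\Gamma(j)/\Gamma(j+q)$ and $\Gamma(k+q)/\Gamma(k)$ have integral representations, and the alternating sums then become geometric series in a variable $u$. Summing over $k > j$ produces a factor like $(1-u/2)^{q-1}$, and the diagonal and off-diagonal contributions combine into the stated single integral. The term $(u^{-2q+1} - 1)/(1-u)$ comes from the $j$-sum of $\bbE[W_j^2]$ together with its Beta representation. Justifying the interchange of sum and integral is where I expect to need analytic continuation in $q$ (the auxiliary complex-analytic lemma in Appendix~\ref{sec:aux}), since the Beta representations only converge for part of the range of $q$. The case $q = \tfrac12$ then follows as the limit $q \to \tfrac12$, which turns $(u^{-2q+1} - 1)/(2q-1)$ into $-\ln u$.
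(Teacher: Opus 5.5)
Your treatment of the decomposition and of the convergence of $Z_n$ is correct and is essentially the paper's. You use the same martingale $\Xi_n$ and the same predictable quadratic variation. You also use the same pairing of indices $2m-1,2m$ into a martingale part plus an absolutely summable drift $-(1-q)W_{2m-1}/(2m(2m-1))$. Your derivation of $\Delta^{(s)}_{n+1}-\Delta^{(s)}_n=(-1)^n\varepsilon_{n+1}$ from the parity of the current location is a noticeably cleaner proof of $\Delta^{(s)}_n=S_n$ than the paper's strong induction via the epoch $\zeta_n$ of the leftmost surviving letter. (The parity facts are: $a$ moves right from even locations, $b$ from odd ones, and $\Delta^{(s)}_n\equiv n \pmod 2$.) A minor point: your Borel--Cantelli alternative for bounding $|W_k|$ does not work as stated. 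Chebyshev gives tails only of order $1/\log k$, and the trivial bound $|W_k-W_{k'}|\le|k-k'|$ is too weak to interpolate along a sufficiently sparse subsequence. It is also unnecessary, since $\bbE\sum_k|W_k|/k^2\le\sum_k(\bbE W_k^2)^{1/2}/k^2<\infty$ gives a.s.\ absolute summability directly.

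The genuine gap is in the variance computation.

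\emph{The missing step.} The double series $q^2\sum_{j,k}(-1)^{j+k}\bbE[W_jW_k]/(jk)$ is far from absolutely convergent. Already its diagonal $\sum_k \bbE W_k^2/k^2=\sum_k H(k,q)/k^2$ diverges for every $q\in[-1,1)$, because $H(k,q)\gtrsim k$. So $\bbE Z_\infty^2$ is only available as $q^2\lim_n\bbE[\tilde Z_{n+1}^2]$, a limit of square partial sums. You may not sum the inner alternating series over $k>j$ to infinity for each fixed $j$ and then sum over $j$ without controlling what the truncation at $n$ leaves behind. In the paper this remainder is the term $2T_{2,n,q}$ of Lemma~\ref{lem:T1-T2}. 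It is a Beta integral of size $\asymp n^{q-1}$ times $\sum_{k\le n}(-1)^{n-k}a_k$, where $a_k\asymp k^{-q}$ (respectively $k^{-1/2}\log k$ at $q=\tfrac12$, and $k^{q-1}$ for $q>\tfrac12$). Bounded termwise, this remainder is of order $1$ for $q<\tfrac12$ and unbounded for $q\ge\tfrac12$. It tends to $0$ only because of the summation-by-parts cancellation in Lemma~\ref{lem:T2-limit}. Your sketch has no counterpart to this step, and it is exactly what identifies the iterated sum you propose to evaluate with $\bbE Z_\infty^2$.

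\emph{Related corrections.} The Beta representation that makes the inner sum geometric is for the combined ratio $(j+q)_l/(j+1)_l$, with the factor $1/k$ absorbed. Separate representations of $\Gamma(k+q)/\Gamma(k)$ and $\Gamma(j)/\Gamma(j+q)$ would not do this, since the former grows for $q>0$. With the combined ratio, the Beta integrals are not what restricts $q$. The restriction comes from Euler's representation of ${}_2F_1(2q+1,1;q+1;x)$ used for the outer sum, which needs $q>0$. Hence the analytic continuation is to $q\in[-1,0]$, and it requires uniform convergence of $T_{1,n,q}$ on a complex domain (Lemma~\ref{lem:T1-unif-conv}) as well as holomorphy of the closed form.

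\emph{The case $q=\tfrac12$.} Obtaining this case as the limit $q\to\tfrac12$ needs continuity of $q\mapsto\lim_n T_{1,n,q}$ at $\tfrac12$. The paper explicitly flags this as unjustified and instead computes directly using the generating function of harmonic numbers. It can be supplied: give a summable bound on the terms of $\lim_n T_{1,n,q}$ that is uniform for $q$ near $\tfrac12$, and use dominated convergence in the closed-form integral. But you need to state and prove it.
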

In Figure~\ref{fig:var-Z-inf}, we plot $\bbE[Z_{\infty}^2]$ as a function of $q$, showing its dependence on the memory parameter.

\begin{figure}
    \centering
    \includegraphics[width=0.6\linewidth]{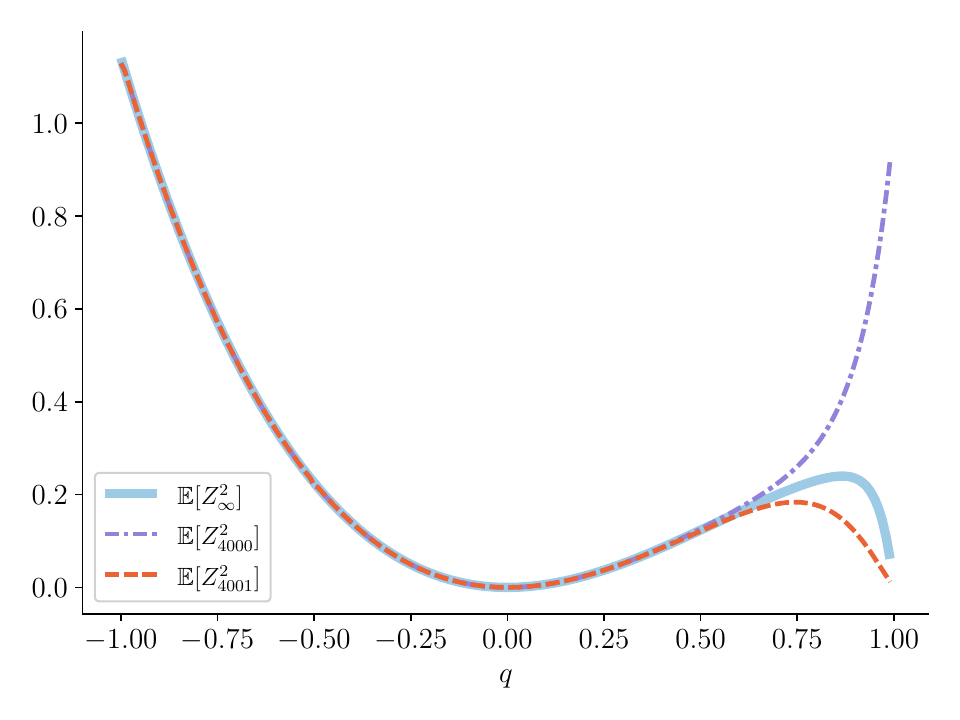}
    \caption{$\bbE[Z_{\infty}^2]$ as a function of $q = 2p - 1$. The plot illustrates the slow rate of convergence for $q$ close to $1$.}
    \label{fig:var-Z-inf}
\end{figure}

From Theorem~\ref{thm:main}, we see that the first and second order behaviours of the ERW on $\bbZ_2 * \bbZ_2$ match those of the SSRW on $\bbZ$. 

\begin{corollary}\label{cor:S_n}
    For any $p \in [0, 1)$, the following hold.
    \begin{enumerate}
        \setlength{\itemsep}{5pt}
        \item[(a)] \textbf{Strong Law:} $\frac{\Delta_n^{(s)}}{n} \convas 0$.
        \item[(b)] \textbf{Functional Central Limit Theorem:} 
        The process $\big(\frac{1}{\sqrt{n}}\Delta_{\lfloor nt \rfloor}^{(s)}\big)_{t \in [0, 1]}$ converges weakly to the standard Brownian motion $(B_t)_{t \in [0, 1]}$ in the Skorohod space of c\`adl\`ag functions on $[0, 1]$. In particular, $\frac{\Delta_n^{(s)}}{\sqrt{n}} \convd N(0, 1)$. %
        \item[(c)] \textbf{Law of the Iterated Logarithm:} We have 
        \[
            \limsup_{n \to \infty} \frac{\Delta_n^{(s)}}{\sqrt{2n \log{\log{n}}}} = - \liminf_{n \to \infty} \frac{\Delta_n^{(s)}}{\sqrt{2n \log{\log{n}}}} = 1.
        \]
        As a consequence, $\Delta_n^{(s)}$ is recurrent. 
        
        \item[(d)] \textbf{Quadratic Strong Law:} $\frac{1}{\log{n}} \sum_{k=1}^n \frac{(\Delta_k^{(s)})^2}{k^2} \convas 1$.
    \end{enumerate}
\end{corollary}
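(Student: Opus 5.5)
The corollary will follow from Theorem~\ref{thm:main}: transfer the classical martingale limit theorems from $\Xi_n$ to $\Delta_n^{(s)}$. The correction $Z_n = \Delta_n^{(s)} - \Xi_n$ is predictable and converges almost surely (and in $L^2$) to the finite random variable $Z_\infty$. Hence $\sup_n |Z_n| < \infty$ almost surely, so $Z_n$ is $O(1)$ almost surely. This is negligible against every normalisation in (a)--(d). We must also exclude $p = 1$, i.e.\ $q = 1$, since there the walk degenerates and $\langle \Xi\rangle_n/n \to 1$ fails. The theorem is stated for $q \in [-1,1)$, which matches the hypothesis $p \in [0,1)$.

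\textbf{Step 1 (SLLN and LIL).} The martingale $\Xi_n$ has bounded increments and satisfies $\langle \Xi\rangle_n/n \to 1$ almost surely. By the strong law for martingales (e.g.\ $\Xi_n/\langle \Xi\rangle_n \to 0$ a.s.\ on $\{\langle\Xi\rangle_\infty=\infty\}$), we get $\Xi_n/n \to 0$. Stout's law of the iterated logarithm for martingales with bounded increments gives
\[
\limsup_n \frac{\Xi_n}{\sqrt{2\langle \Xi\rangle_n \log\log \langle \Xi\rangle_n}} = 1,
\]
and symmetrically for the $\liminf$. Since $\langle\Xi\rangle_n \sim n$, we may replace $\langle\Xi\rangle_n$ by $n$. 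Adding $Z_n = O(1)$ then yields (a) and (c). Recurrence follows because $\Delta^{(s)}_n$ moves by $\pm1$ steps and its $\limsup$ and $\liminf$ are $+\infty$ and $-\infty$, so it crosses $0$ infinitely often.

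\textbf{Step 2 (FCLT).} We apply the martingale functional CLT (e.g.\ Helland or Hall--Heyde, Theorem 4.1) to the array $\Xi_{\lfloor nt\rfloor}/\sqrt n$. Two conditions are needed. The conditional variance condition, $\langle \Xi\rangle_{\lfloor nt\rfloor}/n \to t$ in probability, follows from the almost sure limit. The conditional Lindeberg condition is trivial because the increments are bounded, so $|\Delta \Xi_k|/\sqrt n \to 0$ uniformly. For the correction, $\sup_{t\le 1}|Z_{\lfloor nt\rfloor}|/\sqrt n \le \sup_k |Z_k|/\sqrt n \to 0$ almost surely, so weak convergence in Skorohod space transfers by Slutsky's lemma for the uniform metric.

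\textbf{Step 3 (quadratic strong law).} This is the step needing the most care. I would invoke the quadratic strong law for martingales (Chaabane--Maaouia, or Bercu's ASCLT-type result): for a martingale with bounded increments and $\langle \Xi\rangle_n/n \to 1$,
\[
\frac{1}{\log n}\sum_{k\le n} \frac{\Xi_k^2}{k^2} \to 1 \quad \text{a.s.}
\]
If I had to prove this directly, I would apply It\^o/Abel summation to $\Xi_k^2/k^2$ with weights $1/k^2$. Then $\sum_k \Xi_k^2/k^2$ splits into $\sum_k \langle\Xi\rangle_k/k^2 \sim \log n$ plus martingale transforms of the form $\sum_k \Xi_{k-1}\,\Delta\Xi_k/k$. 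The latter are $o(\log n)$ by the LIL and the martingale strong law.

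To transfer to $\Delta^{(s)}_k$, expand
\[
(\Delta_k^{(s)})^2 = \Xi_k^2 + 2\,\Xi_k Z_k + Z_k^2.
\]
The term $\sum_k Z_k^2/k^2$ is bounded almost surely. For the cross term, the LIL gives $|\Xi_k Z_k| = O(\sqrt{k\log\log k})$, so $\sum_k |\Xi_k Z_k|/k^2 < \infty$ almost surely. Both are $o(\log n)$, which gives (d).
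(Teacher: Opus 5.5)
Your proposal is correct and follows the same route as the paper: take the decomposition $\Delta_n^{(s)} = \Xi_n + Z_n$ from Theorem~\ref{thm:main}, apply the martingale strong law, Stout's LIL, the martingale FCLT and the martingale quadratic strong law to $\Xi_n$, and show that the correction term is negligible. The only differences are in how negligibility is shown, and both of your versions are valid: for (b) you note that a.s.\ convergence of $Z_n$ already gives $\sup_n |Z_n| < \infty$ a.s., which makes the paper's Doob-maximal-inequality argument for $\max_{k \le n} |\tilde{Z}_k| = O_P(1)$ unnecessary, and for (d) you control the cross term via the LIL instead of the paper's Cauchy--Schwarz bound.
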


\section{Proofs}\label{sec:proofs}
We first give an alternative description of the signed location process $(\Delta_n^{(s)})$. To that end, we define a reinforced walk $(S_n)_{n \ge 0}$ on $\bbZ$, with $S_0:=0$ and increments $X_n = S_n - S_{n - 1}, n \ge 1$, as follows. For a realisation $(g_n)_{n \ge 1}$ of the ERW, let
\begin{equation}\label{eqn:X_n_definition}
    X_{n+1} := 
    \begin{cases}
        1 & \text{if $n+1$ is odd and $g_{n+1} = a$, or $n+1$ is even and $g_{n+1}=b$},\\
        -1 & \text{if $n+1$ is odd and $g_{n+1} = b$, or $n+1$ is even and $g_{n+1}=a$}.
    \end{cases}
\end{equation}
In other words, at odd epochs we encode the $a$ steps  as $1$ and the $b$ steps as $-1$, and flip the signs at the even epochs. The key observation is that $S_n$ is an alternative description of $\Delta_n^{(s)}$.
\begin{proposition}\label{prop:Delta^{(s)}_n=S_n}
    For all $n \ge 0$, $\Delta^{(s)}_n = S_n$.
\end{proposition}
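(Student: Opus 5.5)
The proof is by induction on $n$. The idea is to identify, once and for all, how left multiplication by $a$ or $b$ acts on the integer labels of the bi-infinite path in Figure~\ref{fig:cayley_graph}. Then we observe that the parity of the walker's position is deterministic (equal to the parity of $n$), and that this parity alone decides the direction of each step.

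\emph{Step 1 (labelling).} I would first verify that the map sending a reduced word to its signed location is a bijection $D_\infty \to \bbZ$, described as follows. For $k \ge 1$, the unique reduced word of length $k$ whose rightmost letter is $a$ (namely $a$, $ba$, $aba$, \dots) sits at $+k$. The reduced word of length $k$ whose rightmost letter is $b$ sits at $-k$, and $e$ sits at $0$. This is exactly definition \eqref{eq:signed-location}, since $\rho(w,a) < \rho(w,b)$ precisely when the reduced word of $w \neq e$ ends in $a$. The leftmost letter of the word at position $x$ is then determined by $x$:
\begin{itemize}
\item for $x > 0$, it is $a$ if $x$ is odd and $b$ if $x$ is even;
\item for $x < 0$, it is $b$ if $|x|$ is odd and $a$ if $|x|$ is even.
\end{itemize}

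\emph{Step 2 (action of a step).} The walk moves via $w_{n+1} = g_{n+1} w_n$, and in a free product of $\bbZ_2$'s left multiplication by $s$ either cancels the leftmost letter (if it equals $s$) or prepends $s$. A short case check using Step 1 then gives the rule: from position $x$, a step $a$ moves to $x+1$ if $x$ is even and to $x-1$ if $x$ is odd, while a step $b$ does the opposite. The cases to check are $x = 0$, $x > 0$ odd or even, and $x < 0$ with $|x|$ odd or even. For example, at $x < 0$ with $|x|$ odd, the leftmost letter is $b$, so $a$ extends the word to length $|x|+1$ on the negative side, i.e.\ it moves to $x - 1$.

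\emph{Step 3 (induction).} Assume $\Delta^{(s)}_n = S_n$. Since every step changes the location by $\pm 1$ and $\Delta^{(s)}_0 = 0$, the parity of $\Delta^{(s)}_n$ equals the parity of $n$. By Step 2, $g_{n+1} = a$ gives an increment of $+1$ exactly when $n$ is even, i.e.\ when $n+1$ is odd, and $g_{n+1} = b$ gives $+1$ exactly when $n+1$ is even. This is precisely $X_{n+1}$ as defined in \eqref{eqn:X_n_definition}, so $\Delta^{(s)}_{n+1} = S_n + X_{n+1} = S_{n+1}$.

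The only real care needed is in Step 2: being consistent with the left-multiplication convention of the Cayley graph, and handling the boundary case $x = 0$. Both are routine once the table of leftmost letters from Step 1 is in place.
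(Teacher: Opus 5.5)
Your proof is correct, but it takes a different and more direct route than the paper's.

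The paper argues by strong induction, splitting on the sign of $\Delta^{(s)}_n$. In the positive case it introduces two auxiliary times: the last visit $\tau_n$ to $e$, and the epoch $\zeta_n$ at which the current leftmost letter of $w_n$ was written. It uses the induction hypothesis to get $X_{\zeta_n}=\delta^{(s)}_{\zeta_n}=1$, so the parity of $\zeta_n$ identifies $g_{\zeta_n}$. Since $n-\zeta_n$ is even, it can then rewrite $X_{n+1}$ as $\bbI(g_{n+1}\ne g_{\zeta_n})-\bbI(g_{n+1}=g_{\zeta_n})$.

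You instead read the leftmost letter off the current position. Reduced words in $\bbZ_2*\bbZ_2$ alternate, so the signed location evolves by a time-homogeneous rule: $x\mapsto x+(-1)^x$ under $a$ and $x\mapsto x-(-1)^x$ under $b$. Combined with $\Delta^{(s)}_n\equiv n \pmod 2$, this gives $\delta^{(s)}_{n+1}=X_{n+1}$ at once.

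What your version buys:
\begin{itemize}
\item It avoids the auxiliary times $\tau_n,\zeta_n$.
\item It uses only the parity of the current position, not the strong induction hypothesis about earlier increments. Your induction is really just summing increments.
\item It covers the negative side explicitly in the same case table, which the paper dismisses as ``similar''.
\end{itemize}

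What the paper's version buys: it stays closer to the letter-cancellation dynamics behind the ``reflective'' heuristic in the introduction, at the cost of more bookkeeping.
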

\begin{proof}
    Define $\delta^{(s)}_n := \Delta^{(s)}_n - \Delta^{(s)}_{n-1}$ for $n \ge 1$. We prove, by strong induction on $n$, that $\delta^{(s)}_n = X_n$ for all $n \ge 1$. Clearly, $\delta^{(s)}_1 = X_1$. Now suppose that $\delta^{(s)}_k = X_k$ for $k \le n$ which automatically implies that $\Delta^{(s)}_k = S_k$ for $k \le n$. We shall show that $\delta^{(s)}_{n+1} = X_{n+1}$. We will have three cases depending on the sign of $\Delta^{(s)}_n$.
    
    \textbf{Case 1:} Suppose $\Delta^{(s)}_n = S_n =0$. Then 
    \[
        \delta^{(s)}_{n+1} = 
        \begin{cases}
        1 & \text{if } g_{n+1} = a, \\
        -1 & \text{if } g_{n+1} = b. 
        \end{cases}
    \]
    Since $S_n = 0$, $n + 1$ must be odd. So, from \eqref{eqn:X_n_definition}, we see that $\delta^{(s)}_{n+1} = X_{n+1}$.
    
    \textbf{Case 2:} Suppose $\Delta^{(s)}_n = S_n>0$. Define $\tau_n$ to be the last time before $n$, when $w_k$ was $e$, i.e. $\tau_n := \max\{0 \le k \le n-1 : w_k =e\}$. Then, of course, $\Delta^{(s)}_{\tau_n} = S_{\tau_n} = 0$. 
    
    Note that in our dynamics, at each epoch the newly arrived letter either gets cancelled by the leftmost letter of the existing string, or gets appended to it. Thus for each letter of the current string, there is an associated epoch when it first appeared. We call $\zeta_n$ to be this epoch for the leftmost letter of $w_n$; that is, we define $\zeta_n$ to be the first epoch after $\tau$, when the string $w_n$ appeared and its leftmost letter did not get cancelled during the time period from $\zeta_n$ to $n$. Formally,
    \[
        \zeta_n := \min\{\tau_n +1 \le k \le n: w_k = w_n,   \Delta^{(s)}_{\ell} \ge \Delta^{(s)}_n \,\, \text{for} \,\, k+1 \le \ell \le n\}.
    \]
    For example, if the process progresses as follows:
    \begin{align*}
    w_0 &= e, \\
    w_1 &= a, \\
    w_2 &= ba, \\
    w_3 &= aba, \\
    w_4 &= baba, \\
    w_5 &= aba, \\
    w_6 &= baba, \\
    w_7 &= ababa, \\
    w_8 &= baba, \\
    w_9 &= aba, \\
    &\,\,\,\vdots
    \end{align*}
    then
    \[
        \zeta_7 = 7, \quad \zeta_8 = 6, \quad \zeta_9 = 5.
    \]
    We note that
    \begin{equation}\label{eq:del_n_plus_1}
        \delta^{(s)}_{n+1} = \bbI(g_{n+1} \ne g_{\zeta_n}) - \bbI(g_{n+1} = g_{\zeta_n}).
    \end{equation}
    Observe that $X_{\zeta_n} = \delta^{(s)}_{\zeta_n}$ by induction hypothesis, which in turn must be $1$, because otherwise, the minimality in the definition of $\zeta_n$ will be violated. So knowing the parity of $\zeta_n$ determines $g_{\zeta_n}$. Further note that since $w_{\zeta_n} = w_n$ and cancellations always occur in pairs, $n - \zeta_n$ must be even. Hence, $n + 1 - \zeta_n$ is odd. Thus
    \begin{align}\nonumber
        X_{n+1} &= \bbI(n+1\,\text{is odd},\, g_{n+1}=a) + \bbI(n+1\,\text{is even},\, g_{n+1}=b) \\ \nonumber
        &\qquad -\bbI(n+1\,\text{is odd},\, g_{n+1}=b) -\bbI(n+1\,\text{is even},\, g_{n+1}=a) \\ \nonumber
        &= \bbI(\zeta_n \,\text{is even},\, g_{n+1}=a) + \bbI(\zeta_n \,\text{is odd}, \, g_{n+1}=b) \\ \nonumber
        &\qquad -\bbI(\zeta_n \,\text{is even},\, g_{n+1}=b) - \bbI(\zeta_n\,\text{is odd}, \, g_{n+1}=a) \\ \nonumber
        &= \bbI(g_{\zeta_n} = b,\, g_{n+1}=a) + \bbI(g_{\zeta_n} = a,\, g_{n+1}=b) \\ \nonumber
        &\qquad -\bbI(g_{\eta_n} = b,\, g_{n+1}=b) -\bbI(g_{\zeta_n} = a,\, g_{n+1}=a) \\ \label{eq:X_n_plus_1}
        &= \bbI(g_{n+1} \ne g_{\eta}) - \bbI(g_{n+1} = g_{\eta}),
    \end{align}
    where the third equality follows from the fact that $X_{\zeta_n} = 1$. If follows from \eqref{eq:del_n_plus_1} and \eqref{eq:X_n_plus_1} that $\delta^{(s)}_{n+1} = X_{n+1}$.

    \textbf{Case 3:} $\Delta^{(s)}_n = S_n<0$. This can be treated in a similar manner as Case 2.
    
    The proof is therefore complete by strong induction.
\end{proof}

In view of Proposition~\ref{prop:Delta^{(s)}_n=S_n}, we shall consider $(S_n)_{n \ge 0}$ as the signed location process. In fact, \eqref{eqn:X_n_definition} gives an immediate coupling between the ERW on $D_{\infty}$ and the ERW on $\bbZ$. Let $W_n = A_n - B_n = 2 A_n - n$ denote the ERW on $\bbZ$. Both $A_n$ and $W_n$ are time-inhomogeneous Markov chains.
Let $\Delta$ denote the difference operator: For any real sequence $(x_n)_{n \ge 0}$, $\Delta x_n = x_n - x_{n - 1}$ for $n \ge 1$. Then, given $\cF_n$
\begin{equation}\label{eq:W-dynamics}
    \Delta W_{n + 1} = \begin{cases}
        1 & \text{with probability } \frac{1}{2} + \frac{q W_n}{2n}, \\ 
        -1 & \text{with probability } \frac{1}{2} - \frac{q W_n}{2n}.
    \end{cases}
\end{equation}
From \eqref{eqn:X_n_definition}, we note that
\begin{equation}\label{eq:rel-S-W-1st-diff}
    \Delta S_n = \begin{cases}
        \Delta W_n & \text{if $n$ is odd}, \\
        -\Delta W_n & \text{if $n$ is even}. 
    \end{cases}
\end{equation}
These relations intuitively explain the lack of superdiffusion in $(S_n)_{n \ge 0}$. The drift $\Delta W_{n + 1}$ essentially cancels $\Delta W_{n}$.

\begin{remark}\label{rem:non-Markov}
    We may also observe that $S_n$, unlike $W_n$, is \emph{not} a time-inhomogeneous Markov chain. In fact, it is highly non-Markovian in nature. Indeed, rewriting \eqref{eq:rel-S-W-1st-diff} as
    \[
        \Delta W_n = (-1)^{n - 1} \Delta S_n,
    \]
    we get
    \[
        W_n = \sum_{k = 1}^n (-1)^{k - 1} \Delta S_k = (-1)^{n - 1} S_n + 2 \sum_{k = 1}^{n - 1} (-1)^{k - 1} S_k.
    \]
    But then,
    \[
        \bbP(S_{n + 1} = S_{n} \pm 1 \mid \cF_{n}) = \bbP(\Delta W_{n + 1} = \pm (-1)^n \mid \cF_{n}) = \frac{1}{2} \pm (-1)^n \frac{q W_{n}}{2n}
    \]
    depends on the full path $(S_1, \ldots, S_{2n})$ in a non-trivial manner.
\end{remark}
    
We now collect some properties of $W_n$ from \cite{bercu2017martingale}, which will be useful in what follows. First of all, we note that $\bbE[W_n] = 0$ for any $n \ge 0$.
\begin{proposition}\label{prop:W_n}
    We have the following.
    \begin{enumerate}
    \setlength{\itemsep}{5pt}
        \item [(a)] \cite[Theorem~3.2]{bercu2017martingale} For $p \in [0, 3/4)$, we have
        \[
            \limsup_{n \to \infty} \frac{|W_n|}{\sqrt{2n\log \log n}} = \frac{1}{\sqrt{3 - 4p}}, a.s.
        \]
        \item [(b)] \cite[Theorem~3.5]{bercu2017martingale} For $p = 3/4$, we have
        \[
            \limsup_{n \to \infty} \frac{|W_n|}{\sqrt{2n\log n \log \log \log n}} = 1, a.s.
        \]
        \item [(c)] \cite[Theorem~3.7]{bercu2017martingale} For $p \in (3/4, 1)$, we have
        \[
            n^{-q} |W_n| \convas L,
        \]
        for some non-degenerate random variable $L$.
    \end{enumerate}
\end{proposition}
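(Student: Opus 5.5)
This proposition simply collects results of \cite{bercu2017martingale}, so it may be cited as stated. If one wants a self-contained argument, I would follow the martingale approach of that paper; the main work is the law of the iterated logarithm for the resulting martingale, treated separately in the three regimes of $q=2p-1$.

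\textbf{Step 1: reduce to a martingale.} From \eqref{eq:W-dynamics} we have $\bbE[W_{n+1}\mid\cF_n]=(1+\frac{q}{n})W_n$. Set
\[
a_n=\prod_{k=1}^{n-1}\Bigl(1+\frac{q}{k}\Bigr)=\frac{\Gamma(n+q)}{\Gamma(n)\Gamma(q+1)}\sim \frac{n^q}{\Gamma(q+1)},
\qquad M_n=\frac{W_n}{a_n}.
\]
Then $(M_n)$ is a martingale. Its increment is $\Delta M_{n+1}=\varepsilon_{n+1}/a_{n+1}$, where $\varepsilon_{n+1}=W_{n+1}-(1+q/n)W_n$ satisfies $|\varepsilon_{n+1}|\le 2$ and $\bbE[\varepsilon_{n+1}^2\mid\cF_n]=1-(qW_n/n)^2$. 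The case $p=0$ ($q=-1$) needs a small fix: $a_2=0$. Here $W_2=0$ deterministically, so I would start the product at $k=2$ and work conditionally on $\cF_2$.

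\textbf{Step 2: predictable quadratic variation.} We have
\[
\langle M\rangle_n=\sum_{k<n}a_{k+1}^{-2}\bigl(1-q^2W_k^2/k^2\bigr).
\]
First show $W_n/n\to 0$ a.s. when $q<1$, using the strong law for martingales applied to $M$ and $W_n=a_nM_n$. Then $\langle M\rangle_n\sim \Gamma(q+1)^2\sum_{k\le n}k^{-2q}$, which gives three regimes.
\begin{itemize}
\item For $q<1/2$ (that is, $p<3/4$): $\langle M\rangle_n\sim \Gamma(q+1)^2\,n^{1-2q}/(1-2q)$. Stout's martingale LIL (increments bounded by $O(a_n^{-1})$, which is small relative to $\sqrt{\langle M\rangle_n/\log\log\langle M\rangle_n}$) gives
\[
\limsup_n \frac{|M_n|}{\sqrt{2\langle M\rangle_n\log\log\langle M\rangle_n}}=1 .
\]
Multiplying by $a_n\sim n^q/\Gamma(q+1)$ yields $\limsup |W_n|/\sqrt{2n\log\log n}=(1-2q)^{-1/2}=(3-4p)^{-1/2}$, which is (a).
\item For $q=1/2$ (that is, $p=3/4$): $\langle M\rangle_n\sim\Gamma(3/2)^2\log n$. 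The same LIL, with $\log\log\langle M\rangle_n\sim\log\log\log n$ and $a_n^2\sim n/\Gamma(3/2)^2$, gives (b).
\end{itemize}

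\textbf{Step 3: the case $q>1/2$, and the main obstacle.} For $q>1/2$, $\sum a_k^{-2}<\infty$, so $M$ is bounded in $L^2$ and converges a.s. and in $L^2$ to some $M_\infty$. This gives $n^{-q}W_n\to M_\infty/\Gamma(q+1)$ a.s., so (c) holds with $L=|M_\infty|/\Gamma(q+1)$. Non-degeneracy follows from $L^2$ convergence:
\[
\operatorname{Var}(M_\infty)=\bbE M_1^2-(\bbE M_1)^2+\sum_k a_{k+1}^{-2}\,\bbE\bigl[1-q^2W_k^2/k^2\bigr]>0,
\]
since already $\operatorname{Var}(M_1)=1>0$.

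I expect the delicate points to be verifying the hypotheses of the martingale LIL (in particular the a.s. asymptotics of $\langle M\rangle_n$, which rely on $W_n/n\to0$ a.s.) and the boundary case $q=1/2$. There the logarithmic growth of $\langle M\rangle_n$ requires checking Stout's increment condition carefully; I would do this using the bound $|\Delta M_{n+1}|\le 2/a_{n+1}=O(n^{-1/2})$.
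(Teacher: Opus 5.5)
The paper gives no proof of its own here; it simply cites Theorems~3.2, 3.5 and 3.7 of \cite{bercu2017martingale}. Your Steps~1--3 are essentially Bercu's martingale argument (normalise by $a_n=\Gamma(n+q)/(\Gamma(n)\Gamma(q+1))$, apply Stout's LIL for $q\le\tfrac12$, use $L^2$-bounded convergence for $q>\tfrac12$), so the overall route is right.

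\textbf{Non-degeneracy in (c).} One step does not prove what (c) literally asserts. The statement is about $n^{-q}|W_n|$, so you need $L=|M_\infty|/\Gamma(q+1)$ to be non-degenerate. The bound $\operatorname{Var}(M_\infty)\ge\bbE[M_1^2]=1$ does not imply this: a variable uniform on $\{\pm c\}$ has positive variance but constant modulus. Your computation only shows that the signed limit $M_\infty/\Gamma(q+1)$ is non-degenerate. An easy repair:
\begin{itemize}
\item Since $M$ is bounded in $L^2$, $M_n=\bbE[M_\infty\mid\cF_n]$.
\item So $|M_\infty|\equiv c$ would force $|M_n|\le c$ a.s. for all $n$.
\item But $\bbP(W_n=n)=\tfrac{1}{2}p^{n-1}>0$, and on this event $M_n=n/a_n\sim\Gamma(q+1)\,n^{1-q}\to\infty$.
\end{itemize}
Thus $M_\infty$ is not even essentially bounded, and $|M_\infty|$ is not a.s. constant.

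\textbf{The strong law in Step~2.} If by ``the strong law for martingales'' you mean $M_n=o(\langle M\rangle_n)$, this gives only $W_n/n=a_nM_n/n=o(n^{-q})$. That does not tend to zero for $q<0$ (i.e.\ $p<\tfrac12$). Use Kronecker's lemma with $b_n=n^{1-q}$ instead:
\[
\sum_n\bbE[(\Delta M_n)^2]/b_n^2\le 4\sum_n a_n^{-2}n^{-2(1-q)}\asymp\sum_n n^{-2}<\infty .
\]
Hence $\sum_n \Delta M_n/b_n$ converges a.s., so $M_n/b_n\to0$. Since $a_n/n\asymp 1/b_n$, this gives $W_n/n\to0$ a.s. for every $q\in[-1,1)$ (with your modified $a_n=1/(n-1)$ when $q=-1$). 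Once $W_n/n\to0$ is in hand, the rest of Step~2 is correct: the Toeplitz asymptotics for $\langle M\rangle_n$, the check of Stout's increment condition, and the constants obtained in (a) and (b).
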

Let
\[
    r_n := \begin{cases}
        n^{1/2} & \text{if } 0 \le p < 3/4, \\
        (\frac{n}{\log n})^{1/2} & \text{if } p = 3/4, \\
        n^{2(1 - p)} & \text{if } 3/4 < p < 1.
    \end{cases}
\]
The following result follows from explicit formula for the variance of the ERW on $\bbZ$ (see, e.g., Eq. (14)-(15) in \cite{schutz2004elephants}, or Eq. (B.19) and the unnumbered equation preceding it in \cite{bercu2017martingale}).
\begin{proposition}\label{prop:W_n_var}
We have %
\begin{equation}\label{eq:W_n_var}
    \bbE [W_k^2] = \frac{\Gamma(k + 2q)}{ \Gamma(k)} \sum_{l = 1}^{k} \frac{\Gamma(l)}{\Gamma(l + 2q)} \\
    = H(k, q),
\end{equation}
where %
\begin{equation}\label{eq:H_defn}
    H(k, q) := \begin{cases}
            \frac{k}{(2q - 1)}\big(\frac{\Gamma(k + 2q)}{\Gamma(k + 1) \Gamma(2q)}-1\big) & \text{if } q \ne 1/2, \\[0.5em]
            k \sum_{j = 1}^{k} \frac{1}{j} & \text{if } q = 1/2.
        \end{cases}
\end{equation}
Consequently, for any $p \in [0, 1)$, there is a constant $C_p > 0$, such that for all $n \ge 1$,
\begin{equation}\label{eq:W_n_var_bd}
    \bbE W_n^2 \le C_p n^2 r_n^{-2}.
\end{equation}
\end{proposition}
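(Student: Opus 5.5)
The plan is to derive a one-step recursion for $v_n := \bbE[W_n^2]$ from the dynamics \eqref{eq:W-dynamics}, solve it as a product–sum, telescope the sum into the closed form $H(k,q)$, and then read off the growth rate from Stirling's formula.

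\textbf{Recursion.} By \eqref{eq:W-dynamics}, $\bbE[\Delta W_{n+1}\mid\cF_n] = qW_n/n$ and $(\Delta W_{n+1})^2 = 1$. Hence
\[
    \bbE[W_{n+1}^2\mid \cF_n] = W_n^2 + 2W_n\cdot \frac{qW_n}{n} + 1 = \Big(1+\frac{2q}{n}\Big)W_n^2 + 1 .
\]
Taking expectations gives $v_{n+1} = (1+2q/n)v_n + 1$ for $n\ge1$, with $v_1 = 1$. Unrolling this linear recursion yields
\[
    v_k = \sum_{l=1}^k \prod_{j=l}^{k-1}\Big(1+\frac{2q}{j}\Big).
\]
Using $\prod_{j=l}^{k-1}\frac{j+2q}{j} = \frac{\Gamma(k+2q)\,\Gamma(l)}{\Gamma(k)\,\Gamma(l+2q)}$, this is exactly the first expression in \eqref{eq:W_n_var}.

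\textbf{Poles of $\Gamma$.} A small care point arises when $k+2q$ or $l+2q$ hits a nonpositive integer. This happens only for $q\in\{-1,-\tfrac12\}$ and small indices. There I will interpret the Gamma ratio as the (finite) product itself, or equivalently argue by continuity in $q$. Both sides are rational functions of $q$ for fixed $k$, so the identities extend.

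\textbf{Closed form.} For $q\neq \tfrac12$, I use the telescoping identity
\[
    \frac{\Gamma(l+1)}{\Gamma(l+2q)} - \frac{\Gamma(l)}{\Gamma(l+2q-1)} = \frac{\Gamma(l)}{\Gamma(l+2q)}\big(l-(l+2q-1)\big) = (1-2q)\frac{\Gamma(l)}{\Gamma(l+2q)} .
\]
Summing over $l=1,\dots,k$ gives
\[
    \sum_{l=1}^k \frac{\Gamma(l)}{\Gamma(l+2q)} = \frac{1}{1-2q}\Big(\frac{\Gamma(k+1)}{\Gamma(k+2q)} - \frac{1}{\Gamma(2q)}\Big),
\]
with $1/\Gamma(2q)=0$ at poles. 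Multiplying by $\Gamma(k+2q)/\Gamma(k)$ gives $\frac{k}{2q-1}\big(\frac{\Gamma(k+2q)}{\Gamma(k+1)\Gamma(2q)}-1\big)$. For $q=\tfrac12$ the summand is $\Gamma(l)/\Gamma(l+1)=1/l$ and the prefactor is $k$, giving $k\sum_{j\le k} 1/j$.

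\textbf{Growth bound.} By Stirling, $\Gamma(k+2q)/\Gamma(k+1)\sim k^{2q-1}$, uniformly bounded by $C k^{2q-1}$ for $k\ge1$. I then split into the three regimes defining $r_n$.
\begin{itemize}
    \item If $q<\tfrac12$ (i.e.\ $p<3/4$), then $H(k,q)\le \frac{k}{1-2q}(1+Ck^{2q-1})\le C_p k = C_p k^2r_k^{-2}$.
    \item If $q=\tfrac12$, then $H(k,q)\le k(1+\log k)\le C k\log k = C k^2 r_k^{-2}$ for $k\ge2$, and the case $k=1$ is absorbed into the constant.
    \item If $q>\tfrac12$, then $H(k,q)\le \frac{C}{(2q-1)\Gamma(2q)}k^{2q}$, and $k^2r_k^{-2}=k^{2-4(1-p)}=k^{2q}$.
\end{itemize}

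The only genuinely delicate point is the bookkeeping at the Gamma poles ($p=0$ and $p=1/4$). I would dispatch this by noting that the recursion gives $v_k$ directly as a polynomial in $q$, and the closed form extends by continuity. Everything else is routine.
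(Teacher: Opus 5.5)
Your argument is correct, and it goes further than the paper does. The paper gives no derivation here: it quotes the closed form from the cited references and leaves the growth bound implicit in the word ``Consequently''. Your route makes the proposition self-contained:
\begin{itemize}
\item the second-moment recursion $v_{n+1} = (1+2q/n)v_n + 1$ with $v_1 = 1$,
\item unrolled into a product--sum,
\item then collapsed by the telescoping identity for $\Gamma(l+1)/\Gamma(l+2q)$,
\item followed by a regime-by-regime Stirling check.
\end{itemize}

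Your approach also settles something the citation leaves murky. At $(k,q) = (1,-\tfrac12), (1,-1), (2,-1)$, both the first expression in \eqref{eq:W_n_var} and $H(k,q)$ contain $\Gamma(0)$ or $\Gamma(-1)$ in a \emph{numerator}. The paper's convention $1/\Gamma(0) = 1/\Gamma(-1) = 0$ does not by itself give these a meaning. Your observation fixes this: $v_k$ and $H(k,q) = \frac{k}{2q-1}\big(\frac{(2q)_k}{k!}-1\big)$ are both polynomials in $q$, so the identities hold everywhere. For example, it gives $v_2 = 0$ at $q=-1$, as it must for $p=0$.

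For $q<\tfrac12$, state your bound as $\sup_k |(2q)_k/k!| < \infty$ rather than $\Gamma(k+2q)/\Gamma(k+1) \le Ck^{2q-1}$, since the latter is meaningless at those pole points. Only boundedness is needed there.

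One correction: at $p=\tfrac34$ and $n=1$ you cannot absorb the case into the constant. There $n^2 r_n^{-2} = n\log n = 0$, while $\bbE W_1^2 = 1$. So \eqref{eq:W_n_var_bd} as written is false at that single point. It holds for $n\ge 2$, or with $\log n$ replaced by, say, $1+\log n$. Either version suffices for the paper's later summability arguments, since only finitely many terms are affected.
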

In these formulas appearing in \eqref{eq:W_n_var} and \eqref{eq:H_defn}, we adopt the convention $\frac{1}{\Gamma(0)} = \frac{1}{\Gamma(-1)} = 0$.

We are now ready to prove Theorem~\ref{thm:main}.
\begin{proof}[Proof of Theorem~\ref{thm:main}]
Using \eqref{eq:W-dynamics} and \eqref{eq:rel-S-W-1st-diff}, we compute
\[
    \bbE[\Delta S_{n + 1} \mid \cF_n] = (-1)^n \frac{q W_n}{n}.
\]
It follows that 
\begin{equation}\label{eq:xi_defn}
    \xi_n := \Delta S_n - (-1)^{n - 1} \frac{q W_{n - 1}}{n - 1}
\end{equation}
is a martingale difference sequence, with $|\xi_n| \le 2$. We define
\[
    \Xi_n := \begin{cases}
        \sum_{k = 1}^n \xi_k & \text{if } n \ge 1, \\
        0 & \text{if } n = 0,
    \end{cases}
\]
to be the corresponding martingale.
Then we may write
\begin{equation}\label{eq:S_n-representation}
    S_{n} = \Xi_n + q \sum_{k = 1}^{n - 1} (-1)^k\frac{W_k}{k},
\end{equation}
which is the Doob deomposition of $S_n$.
We now compute the predictable quadratic variation $\langle \Xi \rangle_n = \sum_{k = 1}^n \bbE[\xi_k^2 \mid \cF_{k - 1}]$. Notice that
\begin{align*}
    \bbE[\xi_{n + 1}^2 \mid \cF_{n}] &= \bbE[ (\Delta S_{n + 1})^2 \mid \cF_{n}] - \frac{q^2 W_n^2}{n^2} \\
    &= 1 - \frac{q^2 W_n^2}{n^2}.
\end{align*}
Therefore
\begin{align}\label{eq:qv-decomp} \nonumber
    \frac{\langle \Xi \rangle_n}{n}
    &= \frac{1}{n}\sum_{k=1}^{n} \bigg(1 - \frac{q^2 W_{k - 1}^2}{(k - 1)^2}\bigg)\\
    &= 1 - \frac{q^2}{n}\sum_{k=1}^{n - 1} \frac{W_k^2}{k^2}.
\end{align}
Now by Proposition~\ref{prop:W_n}, for any $p \in [0, 1)$, we have $\frac{W_n^2}{n^2}=o(1)$ almost surely. A fortiori, the Ces\`{a}ro average $\frac{1}{n}\sum_{k=1}^{n-1}\frac{W_k^2}{k^2}$ is also $o(1)$ almost surely, and hence $\frac{\langle \Xi \rangle_n}{n} \convas 1$.

Define
\begin{equation}\label{eq:Z_n-tilde-defn}
    \tilde{Z}_n := \sum_{k = 1}^{n - 1} (-1)^k\frac{W_k}{k},
\end{equation}
so that
\begin{equation}\label{eq:S_n-representation-short}
    S_{n} = \Xi_n + q \tilde{Z}_n.
\end{equation}
We shall show that $\tilde{Z}_n$ converges a.s. and in $L^2$ to some random variable $\tilde{Z}_{\infty}$.
Noting that
\begin{equation}\label{eq:W_k_conditional}
    \bbE[W_{k} \mid \cF_{k-1}] = W_{k - 1} + q \frac{W_{k - 1}}{k - 1},
\end{equation}
let
\[
    \delta_k = W_k - \left(1 + \frac{q}{k - 1}\right) W_{k - 1}
\]
denote the corresponding martingale difference sequence. We have $|\delta_k| \le 1 + |q| \le 2$. Consider
\[
    \tilde{Z}_{2n + 1} = \sum_{k = 1}^n \bigg(\frac{W_{2k}}{2k} - \frac{W_{2k} - 1}{2k - 1}\bigg).
\]
Notice that
\begin{align*}
    \frac{W_{2k}}{2k} - \frac{W_{2k} - 1}{2k - 1} &= \frac{\delta_{2k} + W_{2k - 1}(1 + \frac{q}{2k - 1})}{2k} - \frac{W_{2k - 1}}{2k - 1} \\
    &= \frac{\delta_{2k}}{2k} + \bigg(\frac{1}{2k} + \frac{q}{2k (2k - 1)} - \frac{1}{2k - 1}\bigg) W_{2k - 1} \\
    &= \frac{\delta_{2k}}{2k} - \frac{1 - q}{2k (2k - 1)} W_{k - 1}.
\end{align*}
Thus
\begin{equation}\label{eq:Z_tilda}
    \tilde{Z}_{2n + 1} = \sum_{k = 1}^n \frac{\delta_{2k}}{2k} - (1 - q)\sum_{k = 1}^n \frac{W_{2k - 1}}{2k(2k - 1)}.
\end{equation}
By virtue of Proposition~\ref{prop:W_n}, for any $p\in[0,1)$, there exists $\epsilon>0$ such that $\limsup_{k \to \infty} \frac{|W_k|}{k^{1-\epsilon}} < \infty$. It follows that
\begin{equation}\label{eq:W-series-abs-conv}
    \sum_{k = 1}^\infty \frac{|W_{2k - 1}|}{2k(2k - 1)} < \infty.
\end{equation}
Thus the second term in \eqref{eq:Z_tilda} converges almost surely. Moreover,
\begin{align*}
    \bbE&\left[\left(\sum_{k = 1}^\infty \frac{|W_{2k - 1}|}{2k(2k - 1)}\right)^2\right] \\
    &\le \sum_{k=1}^\infty \frac{\bbE |W_{2k-1}^2|}{(2k-1)^4} + \sum_{k \ne l} \frac{\bbE |W_{2k-1} W_{2l-1}|}{(2k-1)^2 (2l-1)^2} \\
    &\le \sum_{k=1}^\infty \frac{\bbE W_{2k-1}^2}{(2k-1)^4} + \sum_{k \ne l} \frac{(\bbE W_{2k-1}^2)^{1/2} (\bbE W_{2l-1}^2)^{1/2}}{(2k-1)^2 (2l-1)^2} \quad \text{(by Cauchy-Schwarz)} \\
    &\le C_p\sum_{k=1}^\infty \frac{1}{(2k-1)^2 r_{2k-1}^2} + C_p\sum_{k \ne l} \frac{1}{(2k-1) (2l-1) r_{2k-1} r_{2l-1}} \quad \text{(using~\eqref{eq:W_n_var_bd})} \\
    &= C_p\bigg(\sum_{k=1}^\infty \frac{1}{(2k-1) r_{2k-1}}\bigg)^2 \\
    &< \infty.
\end{align*}
It follows by dominated convergence that the second term in \eqref{eq:Z_tilda} also converges in $L^2$ to its a.s. limit. 

We now show that the first term in \eqref{eq:Z_tilda} converges a.s. and in $L^2$. To that end, notice that $M_n = \sum_{k = 1}^n \frac{\delta_{2k}}{2k}$ is in fact a martingale w.r.t. the filtration $(\cF_{2n})_{n \ge 0}$. Indeed,
\[
    \bbE[M_n \mid \cF_{2n - 2}] = M_{n - 1} + \frac{1}{2n}\bbE[\delta_{2n} \mid \cF_{2n - 2}] = M_{n - 1}
\]
since by the tower property,
\[
    \bbE[\delta_{2n} \mid \cF_{2n - 2}] = \bbE[ \bbE[\delta_{2n} \mid \cF_{2n - 1}] \mid \cF_{2n - 2}] = 0.
\]
Moreover, $M_n$ is $L^2$-bounded; in fact, for any $n \ge 1$,
\begin{equation}\label{eq:M-L2-bdd}
    \bbE[M_n^2] \le \sum_{k \ge 1} \bbE[(\Delta M_k)^2] = \sum_{k \ge 1}\bbE[\delta_{2k}^2 / (2k)^2] \le \sum_{k \ge 1} \frac{1}{k^2} = \frac{\pi^2}{6}.
\end{equation}
It follows that $M_n$ converges a.s. and in $L^2$ to some random variable $M_{\infty}$.

Thus $\tilde{Z}_{2n + 1}$ converges a.s. and in $L^2$ to the random variable 
\[
    \tilde{Z}_{\infty} = M_{\infty} - (1 - q) \sum_{k = 1}^\infty \frac{W_{2k - 1}}{2k(2k - 1)}.
\]
Since
\[
    \tilde{Z}_{2n + 2} = \tilde{Z}_{2n + 1} - \frac{W_{2n + 1}}{2n + 1},
\]
and $\frac{W_n}{n}$ converges a.s. and in $L^2$ to $0$ (see Propositions~\ref{prop:W_n} and ~\ref{prop:W_n_var}), we conclude that $\tilde{Z}_{2n + 2}$ also converges a.s. and in $L^2$ to the same limit $\tilde{Z}_{\infty}$. Therefore $\tilde{Z}_n$ converges a.s. and in $L^2$ to $\tilde{Z}_{\infty}$. 
It is clear that
\[
    \bbE[\tilde{Z}_{\infty}] = \lim_{n \to \infty} \bbE[\tilde{Z}_n] = 0.
\]
The claimed variance formula \eqref{eq:var_Z_infty} follows from the equivalent integral representation for $\bbE[\tilde{Z}_{\infty}^2]$ obtained in Proposition~\ref{prop:var-computation}, which also shows that $\tilde{Z}_{\infty}$ is non-degenerate.
\end{proof}

\begin{proof}[Proof of Corollary~\ref{cor:S_n}]
Recall the decomposition \eqref{eq:S_n-representation-short}, and the facts that $(\Xi_n)_{n \ge 0}$ is a martingale with bounded increments, $\frac{\langle \Xi \rangle_n}{n} \convas 1$, and $\tilde{Z}_n \convas \tilde{Z}_{\infty}$. It is clear that for parts (a) and (c), $\tilde{Z}_n$ has negligible contributions. Hence, part (a) follows immediately from the strong law of large numbers for martingales (see, e.g., Theorem~1.3.15 in \cite{duflo1997random}). Part (c) also follows directly from Stout's law of the iterated logarithm for martingales \cite[Theorem~3]{stout1970martingale}.

Now we prove part (b). By the functional central limit theorem for martingales \cite[Theorem 18.2]{billingsley2013convergence}, we may conclude that the process $(\frac{\Xi_{\lfloor nt \rfloor}}{\sqrt{n}})_{t\in [0,1]}$ converges weakly to the standard Brownian motion $(B_t)_{t \in [0, 1]}$ in the space $D[0, 1]$ of c\`adl\`ag functions on $[0, 1]$ equipped with the Skorohod $J_1$ topology. From \eqref{eq:S_n-representation-short}, we see that
\[
    \sup_{t \in [0, 1]}\bigg|\frac{\Xi_{\lfloor nt \rfloor}}{\sqrt{n}} - \frac{S_{\lfloor nt \rfloor}}{\sqrt{n}}\bigg| \le \frac{1}{\sqrt{n}}\max_{1 \le k \le n} |\tilde{Z}_k|.
\]
We shall show that $\max_{1 \le k \le n} |\tilde{Z}_k| = O_P(1)$. First note from \eqref{eq:Z_tilda} that for any odd integer $2m + 1 \le n$,
\begin{align}\label{eq:Z_odd_unif_bd} \nonumber
    |\tilde{Z}_{2m + 1}| &\le |M_m| + (1 - q) \sum_{k = 1}^{m} \frac{|W_{2k - 1}|}{2k (2k - 1)} \\
    &\le \max_{1 \le m \le n} |M_m| + (1 - q) \sum_{k = 1}^{\infty} \frac{|W_{2k - 1}|}{2k (2k - 1)}.
\end{align}
As shown earlier in \eqref{eq:W-series-abs-conv}, the last series is convergent. By Doob's maximal inequality and \eqref{eq:M-L2-bdd},
\[
    \bbP\left(\max_{1 \le m \le n} |M_m| \ge \lambda\right) \le \frac{\bbE [M_n^2]}{\lambda^2} \le \frac{\pi^2}{6\lambda^2},
\]
It therefore follows from \eqref{eq:Z_odd_unif_bd} that
\[
    \max_{\substack{1 \le k \le n \\ k \text{ odd}}} |\tilde{Z}_k| = O_P(1).
\]
Writing
\[
    \tilde{Z}_{2m + 2} = - W_1 - \sum_{k = 1}^m \left(\frac{W_{2k + 1}}{2k + 1} - \frac{W_{2k}}{2k}\right) = - W_1 - \tilde{M}_m + (1 - q) \sum_{k = 1}^m \frac{W_{2k}}{(2k + 1) 2k},
\]
where $\tilde{M}_m := \sum_{k = 1}^m \frac{\delta_{2k + 1}}{2k + 1}$ is a $(\cF_{2m + 1})_{m \ge 1}$-martingale, a
similar analysis yields that
\[
    \max_{\substack{1 \le k \le n \\ k \text{ even}}} |\tilde{Z}_k| = O_P(1).
\]
We conclude that
\[
    \max_{1 \le k \le n} |\tilde{Z}_k| = O_P(1).
\]
It follows that the process $\big(\frac{S_{\lfloor nt \rfloor}}{\sqrt{n}}\big)_{t\in [0,1]}$ converges weakly to $(B_t)_{t \in [0, 1]}$.

Finally, we prove (d). Using the representation \eqref{eq:S_n-representation-short}, we have
\begin{align*}
    \frac{1}{\log{n}} \sum_{k=1}^n \frac{S_{k}^2}{k^2} &= \frac{1}{\log{n}} \sum_{k=1}^n \frac{(\Xi_k + q \tilde{Z}_k)^2}{k^2}\\
    &= \frac{1}{\log n} \sum_{k=1}^n \frac{\Xi_k^2}{k^2} + \frac{q^2}{\log n}\sum_{k=1}^n \frac{\tilde{Z}_k^2}{k^2} + \frac{2q}{\log n} \sum_{k=1}^n \frac{\Xi_k \tilde{Z}_k}{k^2}.
\end{align*}
From this we see using Cauchy-Schwarz that
\begin{align}\label{eq:QSL_proof_perturbation}
    \bigg|\frac{1}{\log n} \sum_{k=1}^n \frac{S_{k}^2}{k^2} -\frac{1}{\log n} \sum_{k=1}^n \frac{\Xi_k^2}{k^2}\bigg| \le \frac{1}{\log n}\sum_{k=1}^n \frac{\tilde{Z}_k^2}{k^2} + 2 \bigg(\frac{1}{\log n}\sum_{k=1}^n \frac{\Xi_k^2}{k^2}\bigg)^{1/2}\bigg(\frac{1}{\log n}\sum_{k=1}^n \frac{\tilde{Z}_k^2}{k^2}\bigg)^{1/2}.
\end{align}
By the quadratic strong law for martingales,
\[
    \frac{1}{\log n} \sum_{k=1}^n \frac{\Xi_k^2}{k^2} \convas 1.
\]
(This version is a direct consequence of Theorem~3 of \cite{chaabane2001invariance} and the law of the iterated logarithm.)
Moreover, since $\tilde{Z}_k \convas \tilde{Z}_{\infty}$, we have 
\[
    \frac{1}{\log n}\sum_{k=1}^n \frac{\tilde{Z}_k^2}{k^2} \convas 0.
\]
It follows that the right-hand side of  \eqref{eq:QSL_proof_perturbation} vanishes a.s. in the large-$n$ limit.
\end{proof}
Finally, we derive the advertised explicit integral representation for $\bbE[\tilde{Z}_{\infty}^2]$.
\begin{proposition}\label{prop:var-computation}
    We have
    \begin{equation}\label{eq:Z-tilde-var-repr}
        \bbE[\tilde{Z}_{\infty}^2] = \begin{cases}
        \frac{1}{2q-1}\int_0^1\frac{u^{-2q + 1} - 1}{1 - u} \cdot \frac{(1 - \frac{u}{2})^{q - 1} - 1}{u} du & \text{if } q \ne \frac{1}{2}, \\[5pt]
        \int_0^1 \frac{-\ln u}{1-u}\cdot\frac{\left(1-\frac{u}{2}\right)^{-1/2}-1}{u}\,du & \text{if } q = \frac{1}{2}.
        \end{cases}
    \end{equation}
\end{proposition}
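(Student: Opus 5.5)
We compute $\bbE[\tilde{Z}_{\infty}^2]$ directly from the covariance structure of the ERW on $\bbZ$. By the $L^2$ convergence $\tilde Z_n \to \tilde Z_\infty$ established in the proof of Theorem~\ref{thm:main}, we have
\[
    \bbE[\tilde Z_\infty^2] = \lim_{n\to\infty} \sum_{j,k=1}^{n-1} (-1)^{j+k}\frac{\bbE[W_jW_k]}{jk}.
\]
For $j \le k$, iterating \eqref{eq:W_k_conditional} gives $\bbE[W_k\mid \cF_j] = \prod_{i=j}^{k-1}\big(1+\frac{q}{i}\big) W_j = \frac{\Gamma(k+q)\Gamma(j)}{\Gamma(k)\Gamma(j+q)}W_j$. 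Hence $\bbE[W_jW_k] = \frac{\Gamma(k+q)\Gamma(j)}{\Gamma(k)\Gamma(j+q)} H(j,q)$, with $H$ as in \eqref{eq:H_defn}.

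The double sum then splits into a diagonal part and twice the off-diagonal part $j<k$. For fixed $j$, the inner alternating sum $\sum_{k>j}(-1)^k\frac{\Gamma(k+q)}{k\,\Gamma(k)}$ is not absolutely convergent. It must therefore be handled via generating functions, which is where the integral representation comes from. The plan is to encode both factors as power series evaluated at a point:
\begin{itemize}
    \item the $k$-factor via $\frac{\Gamma(k+q)}{\Gamma(k)\Gamma(1+q)} = [x^{k-1}](1-x)^{-q-1}$, together with $\frac1k=\int_0^1 t^{k-1}\,dt$;
    \item the $H(j,q)$ factor via $\frac{\Gamma(j+2q)}{\Gamma(j+1)\Gamma(2q)} = [x^j](1-x)^{-2q}$, with the $-1$ term and the prefactor $\frac{j}{2q-1}$ kept separate.
\end{itemize}
The alternation $(-1)^{j+k}$ amounts to evaluating at negative arguments. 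After summing the geometric-type series under the integral sign (justified by Abel summation or by working with $|x|<1$ and letting $x\to -1$), I expect a double integral in variables $s,t\in[0,1]$ built from expressions like $(1+st)^{-q-1}$ and $(1+s)^{-2q}$.

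The key step, and the one I expect to be the main obstacle, is the change of variables that collapses this to the single integral in \eqref{eq:Z-tilde-var-repr}. The appearance of $(1-\frac{u}{2})^{q-1}$ suggests a substitution like $u=\frac{2s}{1+s}$, mapping $-s/(1-s)$ type arguments to $1-u/2$. The factor $\frac{u^{-2q+1}-1}{1-u}$ suggests an inner integral $\int_u^1 v^{-2q}\,dv$ arising from $H$. So I would:
\begin{enumerate}
    \item first write $H(j,q)/j$ as $\frac{1}{2q-1}\int$ of a Beta-type kernel;
    \item then sum over $k$ in closed form;
    \item integrate out one variable, using Fubini to interchange the order of integration;
    \item finally match the result to the stated formula.
\end{enumerate}
If the direct route gets messy, the fallback is to compute $\bbE[\tilde Z_\infty^2]$ instead from the decomposition $\tilde Z_\infty = M_\infty - (1-q)\sum_k \frac{W_{2k-1}}{2k(2k-1)}$. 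There $\bbE M_\infty^2 = \sum_k \bbE\delta_{2k}^2/(4k^2)$, with $\bbE[\delta_k^2] = 1 - q^2\bbE[W_{k-1}^2]/(k-1)^2$, and the cross terms are handled by the same covariance formula.

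The case $q=\frac12$ follows by letting $q\to\frac12$ in the integrand: $\frac{u^{-2q+1}-1}{2q-1}\to -\ln u$, and the interchange of limit and integral is justified by dominated convergence. Alternatively, one can repeat the computation with $H(j,\frac12)=j\sum_{i\le j}\frac1i$ represented as $j\int_0^1\frac{1-v^j}{1-v}\,dv$.

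Non-degeneracy then follows because the integrand is positive on $(0,1)$ whenever $q\neq 0$: both factors change sign together according to the sign of $2q-1$ and of $q-1<0$. Finally, the factor $q^2$ in \eqref{eq:var_Z_infty} comes from $Z_\infty=q\tilde Z_\infty$.
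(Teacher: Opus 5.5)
There is a genuine gap: the proposal leaves out the computation that makes up the proposition, together with the two technical points the paper needs to complete it.

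\textbf{The central computation is not performed.} Your setup is correct and matches the paper's starting point in the proof of Lemma~\ref{lem:T1-T2}: $L^2$ convergence reduces the claim to $\lim_n\sum_{j,k}(-1)^{j+k}\bbE[W_jW_k]/(jk)$, and your covariance formula is right. But the passage from this double sum to the specific single integral is only anticipated (``I expect a double integral'', ``the key step \ldots\ the main obstacle'', steps 1--4), so the formula is never derived. Your encoding also drops the factor $\Gamma(j)/\Gamma(j+q)$ coming from $\bbE[W_jW_k]$. The $j$-summand is a product of three pieces: that factor, $\Gamma(j+2q)/\Gamma(j+1)$ (or $1$), and a $j$-dependent alternating tail in $k$. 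Its generating function is therefore a Hadamard product, and you need integral kernels to decouple the pieces. The paper writes $(k+q)_l/(k+1)_l$ as a Beta integral and then sums over the remaining index with Euler's integral for ${}_2F_1[2q+1,1;q+1;x]$ and ${}_2F_1[2,1;q+1;x]$, which requires $q>0$. That is why it first proves the formula only for $q\in(0,1)$ (Lemma~\ref{lem:T1-limit}). It then reaches $q\in[-1,0]$ by analytic continuation: $T_{1,n,q}$ is holomorphic in $q$ and converges uniformly on the complex rectangle $R_\eta$ (Lemma~\ref{lem:T1-unif-conv}), and the right-hand side is holomorphic there (Lemma~\ref{lem:phi_holomorphic}). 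Your plan gives no indication of how the range $q\le 0$ would be covered.

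\textbf{The limit interchanges are left open.} The paper splits the truncated sum exactly as $\bbE[\tilde Z_{n+1}^2]=T_{1,n,q}+2T_{2,n,q}$. The terms of $T_{1,n,q}$ are nonnegative for $q\in(0,1)$, so monotone convergence and Tonelli justify every rearrangement. The boundary term $T_{2,n,q}$ is shown to vanish by summation by parts with explicit rates (Lemma~\ref{lem:T2-limit}). Your Abel-summation idea can be made rigorous: Abel's theorem holds for series in the Hilbert space $L^2$, so $\bbE[\tilde Z_\infty^2]=\lim_{x\to1^-}\bbE[(\sum_k(-x)^kW_k/k)^2]$, and for $|x|<1$ everything converges absolutely. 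You would still have to pass $x\to1^-$ through whatever integral representation you obtain, and that is not addressed.

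\textbf{The case $q=\frac12$.} Dominated convergence in the integrand only shows that the right-hand side is continuous at $q=\frac12$. You also need continuity of $q\mapsto\bbE[\tilde Z_\infty^2]$ at $\frac12$, which is exactly the interchange the paper's Remark says it cannot justify. It does follow from a summable bound on the terms of $T_{1,n,q}$, uniform for $q$ near $\frac12$, combined with $T_{2,n,q}\to 0$, but you must supply that argument. Your alternative harmonic-number computation is what the paper actually does.

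\textbf{A small correction on non-degeneracy.} For every $q<1$ and $u\in(0,1)$, both $\frac{u^{1-2q}-1}{2q-1}$ (read as $-\ln u$ at $q=\frac12$) and $(1-\frac u2)^{q-1}-1$ are positive; neither changes sign. So the integrand is positive even at $q=0$, and the degeneracy of $Z_\infty$ at $q=0$ comes only from the prefactor $q$.
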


The proof of Proposition~\ref{prop:var-computation} is non-trivial, and makes crucial use of Euler's integral formula for Gauss's hypergeometric function ${}_2F_1$. Let $(x)_n := x (x + 1) \cdots (x + n - 1) = \Gamma(x+n)/\Gamma(x)$ denote the Pochhammer symbol for rising factorials. We recall the definition of Gauss's hypergeometric function:
\begin{equation*}
    \gausshyp{a}{b}{c}{z} := \sum_{n=0}^\infty \frac{(a)_n (b)_n}{(c)_n} \cdot \frac{z^n}{n!},
\end{equation*}
where $c$ is not zero or a negative integer. Euler's integral representation (see, e.g., Theorem~2.2.1 of \cite{andrews1999special}) says that
\begin{equation}\label{eq:euler-representation}
    \gausshyp{a}{b}{c}{z} = \frac{\Gamma(c)}{\Gamma(b)\Gamma(c - b)} \int_0^1 t^{b - 1} (1 - t)^{c - b -1} (1 - zt)^{-a} \, dt,
\end{equation}
which is valid for $c > b > 0$.

The proof of Proposition~\ref{prop:var-computation} will be based on four lemmas. The first one gives a decomposition of the variance of $\tilde{Z}_n$.
\begin{lemma}\label{lem:T1-T2}
Define
\begin{align}\label{eq:T1-defn}
    T_{1, n, q} &:= \sum_{k = 1}^n \frac{H(k,q) I(k, q)}{k^2} \cdot \int_0^1 \frac{u^{k+q-1} (1-u)^{1 - q}}{1 + u} \, du, \\ \label{eq:T2-defn}
    T_{2, n, q} &:= \int_0^1 \frac{u^{n + q} (1 - u)^{-q}}{1 + u}\, du \sum_{k = 1}^n (-1)^{n - k} \frac{H(k, q) I(k, q)}{k^2},
\end{align}
where $H(k, q)$ is as in \eqref{eq:H_defn} and
\begin{equation}\label{eq:I-defn}
    I(k, q) := \bigg(\int_0^1 u^{k + q - 1} (1 - u)^{-q} \, du\bigg)^{-1} = \frac{\Gamma(k + 1)}{\Gamma(k + q)\Gamma(1 - q)}.
\end{equation}
Then for any $n \ge 0$,
\begin{equation}\label{eq:breakdown-into-T1-T2}
    \bbE[\tilde{Z}_{n + 1}^2] = T_{1, n, q} + 2 \, T_{2, n, q}.
\end{equation}
\end{lemma}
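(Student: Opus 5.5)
The plan is to expand the square of $\tilde{Z}_{n+1} = \sum_{k=1}^n (-1)^k W_k/k$ directly, compute the covariances of the ERW on $\bbZ$ via its Markov structure, and then turn the resulting Gamma ratios into Beta integrals so that the alternating sums become geometric series inside the integral. First,
\[
    \bbE[\tilde{Z}_{n+1}^2] = \sum_{j=1}^n \frac{\bbE[W_j^2]}{j^2} + 2\sum_{1\le j<k\le n} (-1)^{j+k}\frac{\bbE[W_jW_k]}{jk}.
\]
Iterating \eqref{eq:W_k_conditional} with the tower property gives, for $k>j$,
\[
    \bbE[W_k\mid\cF_j]=W_j\prod_{i=j}^{k-1}\Big(1+\frac{q}{i}\Big)=W_j\,\frac{\Gamma(k+q)\Gamma(j)}{\Gamma(k)\Gamma(j+q)}.
\]
Combined with Proposition~\ref{prop:W_n_var}, this yields $\bbE[W_jW_k]=H(j,q)\,\Gamma(k+q)\Gamma(j)/(\Gamma(k)\Gamma(j+q))$.

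Next, I rewrite the $k$-dependence using the Beta integral
\[
    \frac{\Gamma(k+q)}{k\,\Gamma(k)}=\frac{\Gamma(k+q)}{\Gamma(k+1)}=\frac{1}{\Gamma(1-q)}\int_0^1u^{k+q-1}(1-u)^{-q}\,du,
\]
which is valid since $q<1$ and $k+q>0$. The prefactor then simplifies as
\[
    \frac{H(j,q)}{j}\cdot\frac{\Gamma(j)}{\Gamma(j+q)}\cdot\frac{1}{\Gamma(1-q)}=\frac{H(j,q)I(j,q)}{j^2},
\]
by the definition \eqref{eq:I-defn} of $I$. Likewise, the diagonal term is $\frac{H(j,q)}{j^2}=\frac{H(j,q)I(j,q)}{j^2}\int_0^1u^{j+q-1}(1-u)^{-q}\,du$, by the very definition of $I$. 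Interchanging the finite sum over $k$ with the integral, the $j$-th contribution becomes
\[
    \frac{H(j,q)I(j,q)}{j^2}\int_0^1u^{j+q-1}(1-u)^{-q}\Big[1+2\sum_{m=1}^{n-j}(-u)^m\Big]du .
\]

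The geometric sum evaluates to
\[
    1+2\sum_{m=1}^{n-j}(-u)^m=1-\frac{2u}{1+u}+\frac{2u(-u)^{n-j}}{1+u}=\frac{1-u}{1+u}+2(-1)^{n-j}\frac{u^{n-j+1}}{1+u}.
\]
The first piece produces the integrand $u^{j+q-1}(1-u)^{1-q}/(1+u)$, and summing over $j$ gives exactly $T_{1,n,q}$. The second piece produces $2(-1)^{n-j}u^{n+q}(1-u)^{-q}/(1+u)$, whose integral no longer depends on $j$; pulling it out of the $j$-sum gives $2T_{2,n,q}$. This proves \eqref{eq:breakdown-into-T1-T2}.

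The only delicate point I expect is the boundary case $q=-1$ (i.e.\ $p=0$) with $j=1$. There $\Gamma(j+q)=\Gamma(0)$, and the integral $\int_0^1 u^{j+q-1}(1-u)^{-q}\,du$ diverges. In this case the factor $1+q/1$ in the product vanishes, so $\bbE[W_k\mid\cF_1]=0$ and the $j=1$ cross terms are zero. The diagonal term must then be checked directly, interpreting $H(1,q)I(1,q)$ times the integral via the convention $1/\Gamma(0)=0$. Equivalently, and I would try this first, one can verify the identity for $q\in(-1,1)$, where all steps are rigorous, and pass to the limit $q\downarrow-1$, since both sides are finite sums of continuous functions of $q$ under the stated convention. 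Apart from this, everything is a finite algebraic manipulation, so the proof is essentially bookkeeping of the Gamma/Beta identities.
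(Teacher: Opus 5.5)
Your proposal is correct and is essentially the paper's proof: expand the square, get $\bbE[W_jW_k]$ from $\bbE[W_k\mid\cF_j]=\prod_{i=j}^{k-1}(1+q/i)\,W_j$, write the Gamma ratio as $I(j,q)$ times a Beta integral, and sum the alternating geometric series inside the integral. The paper's rewriting $1+2\sum_{l\ge1}=2\sum_{l\ge0}-1$ is the same bookkeeping as your conversion of the diagonal term. Your caution at $q=-1$, $j=1$ is warranted and goes beyond the paper; note that there the $j=1$ summand of $T_{1,n,q}$ is literally $0\cdot\infty$ even with the convention $1/\Gamma(0)=0$. It therefore has to be read as its limit as $q\downarrow-1$ (which equals $1$), and your continuity argument is exactly what justifies that reading.
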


It turns out that for any $q \in [-1, 1)$, $T_{2, n, q}$ vanishes in the large-$n$ limit. In fact, we can establish the following quantitative bounds (notice the slow rate of convergence for $q$ close to $1$; this is reflected in Figure~\ref{fig:var-Z-inf}).
\begin{lemma}\label{lem:T2-limit}
We have
\begin{equation}\label{eq:T2_rates}
    |T_{2, n, q}|
    =
    \begin{cases}
        O(n^{-1})        & \text{if } q \in [-1,\,0), \\
        O(n^{-1}\log n)  & \text{if } q = 0, \\
        O(n^{q-1})       & \text{if } q \in (0,\,1).
    \end{cases}
\end{equation}
\end{lemma}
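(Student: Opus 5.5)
We bound the two factors of $T_{2,n,q}$ separately: the integral prefactor
\[
J_n := \int_0^1 \frac{u^{n+q}(1-u)^{-q}}{1+u}\,du
\]
and the alternating sum
\[
\Sigma_n := \sum_{k=1}^n (-1)^{n-k} \frac{H(k,q)I(k,q)}{k^2}.
\]

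\textbf{The prefactor $J_n$.} Since $\frac{1}{1+u}\le 1$, we have $J_n \le \int_0^1 u^{n+q}(1-u)^{-q}\,du = B(n+q+1,1-q)$. By Stirling this is $\asymp n^{q-1}$ (valid since $1-q>0$). The factor $1/(1+u)\approx 1/2$ near $u=1$, where the mass concentrates, so this is sharp up to constants. Hence $J_n = O(n^{q-1})$.

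\textbf{Size of the summand.} Let $a_k := H(k,q)I(k,q)/k^2$.
\begin{itemize}
\item $I(k,q)=\Gamma(k+1)/(\Gamma(k+q)\Gamma(1-q)) \sim k^{1-q}/\Gamma(1-q)$.
\item From \eqref{eq:H_defn}: $H(k,q)\sim k/(1-2q)$ for $q<1/2$; $H(k,q) \sim k\log k$ for $q=1/2$; and $H(k,q)\sim k^{2q}/((2q-1)\Gamma(2q))$ for $q>1/2$. At $q=0$, $H(k,0)=k$, since $\Gamma(k)/(\Gamma(k+1)\Gamma(0))=0$ by the convention.
\end{itemize}
Therefore $a_k \asymp k^{-q}$ for $q<1/2$, $a_k\asymp k^{-1/2}\log k$ for $q=1/2$, and $a_k\asymp k^{q-1}$ for $q>1/2$.

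A crude bound $|\Sigma_n|\le\sum_k a_k$ loses too much. For instance, for $q\in(0,1/2)$ it gives $n^{1-q}\cdot n^{q-1}=O(1)$, not $O(n^{q-1})$. So we must exploit the alternation.

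\textbf{Exploiting the alternation.} Pair consecutive terms:
\[
|\Sigma_n| \le |a_n| + \sum_{k}|a_{2k}-a_{2k-1}|,
\]
with the appropriate parity bookkeeping. To bound the differences, write $a_k$ as a smooth function of $k$ through its Gamma-ratio expression. For $q\ne 1/2$,
\[
a_k = \frac{\Gamma(k+1)}{(2q-1)\,k\,\Gamma(k+q)\Gamma(1-q)}\Big(\frac{\Gamma(k+2q)}{\Gamma(k+1)\Gamma(2q)}-1\Big).
\]
Each piece has an asymptotic expansion in powers of $k$, so $|a_k-a_{k-1}| = O(k^{-1}a_k)$, up to logarithms at $q=1/2$. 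Alternatively, one can verify this monotonicity/derivative bound directly via the digamma function. This yields $|\Sigma_n| = O(a_n + \sum_{k\le n} a_k/k)$.

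\textbf{Combining the cases.}
\begin{itemize}
\item $q<0$: $a_k\asymp k^{|q|}$, so $\Sigma_n = O(n^{-q})$, and $|T_2| = O(n^{-q}\cdot n^{q-1}) = O(n^{-1})$.
\item $q=0$: $a_k = 1/\Gamma(1)$ is constant, so $\Sigma_n$ is $0$ or $1$. Hence $T_2 = O(n^{-1})$, which a fortiori gives the stated $O(n^{-1}\log n)$. If the exact constancy fails because of convention subtleties, the $\sum a_k/k\sim\log n$ bound still gives the claim.
\item $0<q<1/2$: $\Sigma_n = O(n^{-q}+\sum_k k^{-q-1}) = O(1)$, giving $O(n^{q-1})$.
\item $q=1/2$: $\sum_k k^{-3/2}\log k<\infty$, so again $O(n^{q-1})$.
\item $q>1/2$: $\sum_k k^{q-2}<\infty$ and $a_n\to0$, so $\Sigma_n=O(1)$ and $T_2=O(n^{q-1})$.
\end{itemize}

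\textbf{Main obstacle.} The main work is the uniform difference estimate $|a_k-a_{k-1}|\lesssim a_k/k$ across all regimes of $q$. This is especially delicate near $q=1/2$, where the factor $\frac{1}{2q-1}(\cdot)$ degenerates. I would handle that regime by using the representation \eqref{eq:W_n_var}, $H(k,q)=\frac{\Gamma(k+2q)}{\Gamma(k)}\sum_{l\le k}\frac{\Gamma(l)}{\Gamma(l+2q)}$, which is manifestly smooth in $q$. The differences then come from one Gamma-ratio step plus the single new summand.
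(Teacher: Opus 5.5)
Your proposal is correct and follows essentially the same route as the paper: you bound the prefactor by the Beta integral $\Gamma(n+q+1)\Gamma(1-q)/\Gamma(n+2)=O(n^{q-1})$, and you control the alternating sum $\sum_k(-1)^{n-k}a_k$ by summation by parts, using the exact Gamma-ratio differences $|a_{k+1}-a_k|=O(k^{q-2})+O(k^{-q-1})$, with the harmonic-number form of $H(k,\tfrac12)$ handling $q=\tfrac12$. Your remark that $a_k\equiv 1$ when $q=0$ is also right; it gives the sharper bound $O(n^{-1})$ there, which implies the stated $O(n^{-1}\log n)$.
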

Using Gauss's hypergeometric function (in particular, Euler's integral representation), we can evaluate the limit of $T_{1, n, q}$ for $q \in (0, 1)$.
\begin{lemma}\label{lem:T1-limit}
We have
\begin{equation}\label{eq:T_1_limit}
    \lim_{n \to \infty} T_{1, n, q} = \begin{cases}
    \frac{1}{2q-1}\int_0^1\frac{u^{-2q + 1} - 1}{1 - u} \cdot \frac{(1 - \frac{u}{2})^{q - 1} - 1}{u} du & \text{if } q \in (0, 1) \setminus \{\frac{1}{2}\}, \\[5pt]
    \int_0^1 \frac{-\ln u}{1-u}\cdot\frac{\left(1-\frac{u}{2}\right)^{-1/2}-1}{u}\,du & \text{if } q = \frac{1}{2}.
    \end{cases}
\end{equation}
\end{lemma}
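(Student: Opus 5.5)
We need to prove Lemma~\ref{lem:T1-limit}: the limit of $T_{1,n,q}$ for $q\in(0,1)$. Since every summand of \eqref{eq:T1-defn} is nonnegative for $q\in(0,1)$ (indeed $H(k,q)\ge 0$ as a variance and $I(k,q)>0$), the limit is the full series $\sum_{k\ge1}$. The plan is to rewrite each summand as an integral, interchange sum and integral by Tonelli, and sum a resulting power series in closed form via \eqref{eq:euler-representation}.

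\textbf{Step 1: expand the summands.} For $q\ne\tfrac12$, use \eqref{eq:H_defn} and \eqref{eq:I-defn} to write
\[
    \frac{H(k,q)I(k,q)}{k^2}=\frac{1}{(2q-1)\Gamma(1-q)}\cdot\frac{\Gamma(k)}{\Gamma(k+q)}\Big(\frac{\Gamma(k+2q)}{\Gamma(k+1)\Gamma(2q)}-1\Big).
\]
Next, expand $\frac{1}{1+u}$ in powers of $u$, or more naturally in powers of $u/(1+u)$ or $(1-u)/2$. The integral $\int_0^1 u^{k+q-1}(1-u)^{1-q}(1+u)^{-1}\,du$ is itself Euler's integral for ${}_2F_1[1,k+q;k+2;-1]$ up to Beta factors. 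The factor $(1-\tfrac u2)^{q-1}$ in the target suggests substituting $u\mapsto 1-u$ and writing $1+u=2(1-\tfrac{v}{2})$, where $v=1-u$. This gives
\[
    \int_0^1 \frac{u^{k+q-1}(1-u)^{1-q}}{1+u}\,du=\tfrac12\int_0^1 (1-v)^{k+q-1}v^{1-q}\big(1-\tfrac v2\big)^{-1}dv.
\]

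\textbf{Step 2: sum over $k$ inside the integral.} After the interchange (justified by positivity), we must evaluate
\[
    \sum_{k\ge1}\frac{\Gamma(k)}{\Gamma(k+q)}\Big(\frac{(2q)_k}{k!}-1\Big)(1-v)^{k+q-1}.
\]
I would write $\frac{\Gamma(k)}{\Gamma(k+q)}=\frac{1}{\Gamma(q)}\int_0^1 s^{k-1}(1-s)^{q-1}ds$, a Beta integral, so that the $k$-sum becomes the elementary generating functions $\sum_k \frac{(2q)_k}{k!}x^{k}=(1-x)^{-2q}$ and $\sum_k x^k=\frac{x}{1-x}$, with $x=s(1-v)$. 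This gives a double integral in $(s,v)$. One of the two integrations should then be performed via \eqref{eq:euler-representation}, collapsing into a single integral whose integrand is $\frac{u^{-2q+1}-1}{1-u}\cdot\frac{(1-u/2)^{q-1}-1}{u}$. The $-1$ terms in both factors come from the "$-1$" in $H$ and from subtracting the $k=0$ term of a generating function. The constants $\Gamma(1-q)$ and $\Gamma(q)$ should cancel through the reflection formula and the Beta normalisations.

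\textbf{Step 3: the case $q=\tfrac12$.} Either repeat the computation with $H(k,\tfrac12)=k\sum_{j\le k}1/j$, or pass to the limit $q\to\tfrac12$ in the $q\ne\tfrac12$ formula. I would take the limit: $\frac{u^{-2q+1}-1}{2q-1}\to-\ln u$, and continuity in $q$ of both sides follows by dominated convergence, since $H(k,q)$ is continuous in $q$ and the summands are uniformly dominated near $q=\tfrac12$ by $Ck^{2q-1}\cdot k^{-q}\cdot k^{-(1+q)}\cdot\ldots$, which is summable.

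\textbf{Main obstacle.} The main difficulty is the bookkeeping in Step 2: choosing the right order of integration and the right substitution so that the inner integral becomes exactly an Euler-type ${}_2F_1$ with $z=\tfrac12$, which produces $(1-\tfrac u2)^{q-1}$, and verifying that the gamma constants cancel. If the direct route stalls, I would first sum $\sum_k \frac{\Gamma(k)\,(2q)_k}{\Gamma(k+q)\,k!}y^k$ as $y\,{}_2F_1[1,2q+1;q+1;y]$-type series, apply Euler's representation to it, and only then integrate against the weight in $v$.
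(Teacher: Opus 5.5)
For $q\in(0,1)\setminus\{\frac12\}$ your plan is essentially the paper's. Positivity gives the full series and licenses Tonelli, an integral representation sums the $k$-series in closed form, and one more Euler integral collapses the double integral. Your Beta representation of $\Gamma(k)/\Gamma(k+q)$ plus the binomial series lands, after $u=1-v$, on exactly the paper's kernel $\frac{u^{q-1}}{\Gamma(q)}\int_0^1 \frac{(1-t)^{q-1}}{t}\bigl[(1-ut)^{-2q}-(1-ut)^{-1}\bigr]\,dt$. The paper reaches the same kernel via $u^k/k=\int_0^u x^{k-1}\,dx$ and two ${}_2F_1$'s.

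The step you leave at ``should'' goes through as follows. Put $w=s(1-v)$, swap the order of integration, and set $v=(1-w)r$. The inner integral becomes $\frac{1-w}{2}\int_0^1 r^{1-q}(1-r)^{q-1}\bigl(1-\tfrac{1-w}{2}r\bigr)^{-1}dr=\Gamma(q)\Gamma(1-q)\bigl[\bigl(\tfrac{1+w}{2}\bigr)^{q-1}-1\bigr]$, using ${}_2F_1(1,2-q;2;z)=\frac{(1-z)^{q-1}-1}{(1-q)z}$. So the Euler argument is $\frac{1-w}{2}$ rather than $\frac12$. The Gamma factors cancel outright, with no reflection formula needed, and $u=1-w$ gives the stated integrand. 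The paper's version with $\lambda=\frac{1-v}{1+v}$ is the Pfaff transform of this.

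For $q=\frac12$ your route is genuinely different. The paper redoes the computation with $H(k,\frac12)=kh_k$ and $\sum_k h_k x^k=-\ln(1-x)/(1-x)$. In the remark following the proof, it presents the limit $q\to\frac12$ only as a heuristic whose interchange it could not justify. Your continuity argument does justify it, but you need a $q$-independent majorant, and the one you wrote is not it. Such a majorant cannot be read off \eqref{eq:H_defn}: there the factor $\frac{1}{2q-1}$ is only a removable singularity, and for $q<\frac12$ one has $H(k,q)\sim k/(1-2q)$, so no bound of the shape $Ck^{2q}$ holds.

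Use \eqref{eq:W_n_var} instead. For $q\in[\frac14,\frac34]$ it gives $H(k,q)\le Ck^{\max(1,2q)}(1+\log k)$. Combine this with $I(k,q)\le Ck^{1-q}$ and $\int_0^1\frac{u^{k+q-1}(1-u)^{1-q}}{1+u}\,du\le B(k+q,2-q)\le Ck^{q-2}$. Then the summands are $O(k^{-3/2}\log k)$ uniformly, so the series is continuous in $q$. On the right-hand side, $\bigl|(u^{-\epsilon}-1)/\epsilon\bigr|\le|\ln u|\,u^{-|\epsilon|}$ gives an integrable majorant.

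Your route replaces the second explicit summation by these routine uniform estimates, and in passing settles the paper's remark. The paper's route needs no estimates but repeats the hypergeometric bookkeeping with harmonic numbers.
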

Finally, we shall use an analytic continuation argument to tackle the regime $q \in [-1, 0]$. In the following lemma, $q$ will be treated as a complex number. 
\begin{lemma}\label{lem:T1-unif-conv}
Let $\eta \in (0, \frac{1}{2})$ and define the rectangular domain
\[
    R_\eta := \left\{z \in \bbC : -1 - \eta < \Re(z) < \frac{1}{2} - \eta, |\Im(z)| < 1 \right\}.
\]
Then $T_{1, n, q}$, defined for $q \in R_{\eta}$ by the same formula \eqref{eq:T1-defn}\footnote{Thus both $H(k, q)$ and $I(k, q)$ are also being defined by the same formula for complex $q$}, converges uniformly as a function of $q$.
\end{lemma}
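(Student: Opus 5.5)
We will show that the series $\sum_{k\ge1} a_k(q)$, with
\[
a_k(q) := \frac{H(k,q) I(k,q)}{k^2}\int_0^1 \frac{u^{k+q-1}(1-u)^{1-q}}{1+u}\,du,
\]
satisfies a Weierstrass $M$-test on $R_\eta$: we look for summable $M_k$ with $|a_k(q)|\le M_k$ for all $q\in R_\eta$. This gives uniform convergence of $T_{1,n,q}=\sum_{k\le n}a_k(q)$. Each factor can be bounded separately and uniformly on the bounded set $\overline{R_\eta}$, where $\Re(q)\in[-1-\eta,\tfrac12-\eta]$ and $|\Im q|\le1$.

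\textbf{The integral factor.} We have $|u^{k+q-1}(1-u)^{1-q}|=u^{k+\Re q-1}(1-u)^{1-\Re q}$ and $1+u\ge1$, so the modulus of the integral is at most the Beta value $B(k+\Re q,\,2-\Re q)$. This requires $k+\Re q>0$. That holds for $k\ge 2$, since $\Re q>-1-\eta>-3/2$.

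For $k=1$ and $\Re q\le -1$ the integral diverges. We handle this by a finite-term exception. For $q=-1$ the factor $I(1,q)=\Gamma(2)/(\Gamma(1+q)\Gamma(1-q))$ vanishes. More generally, the product $I(1,q)\int_0^1\cdots$ extends analytically, since $1/\Gamma(1+q)$ cancels the pole of $B(1+q,\cdot)$. Finitely many terms do not affect uniform convergence provided each is bounded (continuous) on $\overline{R_\eta}$. We therefore only need the bounds for $k\ge k_0$.

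Using Stirling uniformly in $q$ on compacts,
\[
B(k+\Re q,2-\Re q)=\frac{\Gamma(k+\Re q)\Gamma(2-\Re q)}{\Gamma(k+2)}\le C\,k^{\Re q-2}.
\]

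\textbf{The factor $I(k,q)$.} We have $I(k,q)=\Gamma(k+1)/(\Gamma(k+q)\Gamma(1-q))$. The function $1/\Gamma(1-q)$ is entire, hence bounded on the compact closure. Uniform Stirling (for complex $q$ in a compact set and $k\to\infty$) gives $|\Gamma(k+1)/\Gamma(k+q)|\le C k^{1-\Re q}$.

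\textbf{The factor $H(k,q)$.} For $q\ne\tfrac12$,
\[
H(k,q)=\frac{k}{2q-1}\Big(\frac{\Gamma(k+2q)}{\Gamma(k+1)\Gamma(2q)}-1\Big).
\]
Since $\Re q<\tfrac12-\eta$, the quantity $|2q-1|\ge 2\eta$ is bounded away from zero, so this formula applies throughout $R_\eta$. As before, $1/\Gamma(2q)$ is entire and $|\Gamma(k+2q)/\Gamma(k+1)|\le Ck^{2\Re q-1}\le C$, because $2\Re q-1<0$. Hence $|H(k,q)|\le Ck$.

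\textbf{Combining.} Multiplying the three bounds gives
\[
|a_k(q)|\le \frac{C k\cdot k^{1-\Re q}\cdot k^{\Re q-2}}{k^2}=Ck^{-2},
\]
which is summable. The $M$-test then yields uniform convergence (and hence analyticity of the limit on $R_\eta$, to be used afterwards for analytic continuation).

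\textbf{Main obstacle.} The delicate point is justifying Stirling-type ratio bounds $|\Gamma(k+\alpha)/\Gamma(k+\beta)|\le Ck^{\Re(\alpha-\beta)}$ uniformly for $\alpha,\beta$ in compact sets with $k\ge k_0$. We would derive this from $\log\Gamma(z)=(z-\tfrac12)\log z-z+O(1)$, uniformly in sectors away from the negative axis, or alternatively from the Wendel/Gautschi-type inequality. A second point needing care is the treatment of the finitely many small-$k$ terms with possible poles, handled via the entire factors $1/\Gamma$ as above.
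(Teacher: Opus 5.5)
Your argument is correct and is essentially the paper's proof: bound each term in modulus, majorize the integral by the Beta value $\Gamma(k+\Re q)\Gamma(2-\Re q)/\Gamma(k+2)$, show $|H(k,q)|\le Ck$ on $R_\eta$, and obtain a $q$-independent $Ck^{-2}$ majorant for the tail. The differences are minor: the paper avoids uniform Stirling by using the explicit inequalities $|\Gamma(z)|\le\Gamma(\Re z)$ and $|\Gamma(z)|\ge\Gamma(\Re z)/\sqrt{\cosh(\pi\Im z)}$, pairing the Beta factor's $\Gamma(k+\Re q)$ with the $1/|\Gamma(k+q)|$ in $I(k,q)$, and you also treat the $k=1$ term explicitly (its integral diverges for $\Re q\le -1$), which fills in a point the paper leaves implicit.
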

We are ready to prove Proposition~\ref{prop:var-computation}.
\begin{proof}[Proof of Proposition~\ref{prop:var-computation}]
Since $\tilde{Z}_n$ converges in $L^2$ to $\tilde{Z}_{\infty}$, using Lemmas~\ref{lem:T1-T2} and \ref{lem:T2-limit}, we see that for any $q \in [-1, 1)$,
\begin{align*}
    \bbE[\tilde{Z}_{\infty}^2] = \lim_{n \to \infty} \bbE[\tilde{Z}_{n + 1}^2] = \lim_{n \to \infty} [T_{1, n, q} + 2 \, T_{2, n, q}] = \lim_{n \to \infty} T_{1, n, q}.
\end{align*}
Then Lemma~\ref{lem:T1-limit} implies the desired representation \eqref{eq:Z-tilde-var-repr} for $q \in (0, 1)$.
However, by its very definition, for each $n$, $T_{1, n, q}$ is an analytic function of $q$ in the domain $R_{\eta}$. Therefore, as a consequence of Lemma~\ref{lem:T1-unif-conv}, it converges to some analytic function on $R_{\eta}$. However,  for $q \in (0, \frac{1}{2} - \eta)$, the pointwise limit of $T_{1, n, q}$ is the integral $\frac{1}{2q-1}\int_0^1\frac{u^{-2q + 1} - 1}{1 - u} \cdot \frac{(1 - \frac{u}{2})^{q - 1} - 1}{u} du$, which itself is an analytic function of $q \in R_\eta$ (see Lemma~\ref{lem:phi_holomorphic} for a proof). We conclude that $\lim_{n \to \infty} T_{1, n, q}$ agrees with this integral for all $q \in R_{\eta}$, and in particular for any $q \in [-1, 0]$. This completes the proof.
\end{proof}

\begin{proof}[Proof of Lemma~\ref{lem:T1-T2}]
Recalling the definition of $\tilde{Z}_n$ in \eqref{eq:Z_n-tilde-defn}, we have
\begin{align*}
    \bbE[\tilde{Z}_{n + 1}^2] = \sum_{k, l = 1}^n \frac{(-1)^{k + l}}{k l} \bbE[W_k W_l].
\end{align*}
From \eqref{eq:W_k_conditional} it follows that
\[
    \bbE[W_{i+1}| \cF_{i}] = \left(1 + \frac{q}{i}\right) W_i,
\]
which, for $k < l$, gives
\[
    \bbE[W_l|\cF_{k}] = \prod_{i=k}^{l-1} \left(1 + \frac{q}{i}\right) W_k = \frac{(k + q)_{l - k}}{(k)_{l - k}} W_k.
\]
Hence for $k <l$, we get using \eqref{eq:W_n_var} that 
\begin{align*}
    \bbE[W_k W_l] &= \bbE[W_k \bbE[W_l|\cF_k]]\\
    &= \frac{(k + q)_{l - k}}{(k)_{l - k}} \bbE[W_k^2]\\
    &= \frac{(k + q)_{l - k}}{(k)_{l - k}} H(k, q).
\end{align*}
Using this, we write
\begin{align}\label{eq:var-Ztilde} \nonumber
    \bbE[\tilde{Z}_{n + 1}^2] &= \sum_{k, l = 1}^n \frac{(-1)^{k + l}}{k l} \bbE[W_k W_l] \\ \nonumber
    &= \sum_{k=1}^n \frac{1}{k^2} \bbE[W_k^2] + 2\sum_{k = 1}^n \sum_{l=k+1}^n \frac{(-1)^{k + l}}{k l} \bbE[W_k W_l] \\ \nonumber
    &= \sum_{k=1}^n \frac{1}{k^2} \bbE[W_k^2] + 2\sum_{k = 1}^n \sum_{l=1}^{n-k} \frac{(-1)^{l}}{k(k+l)} \bbE[W_k W_{k+l}] \\ \nonumber
    &= \sum_{k=1}^n \frac{1}{k^2} H(k,q) + 2\sum_{k = 1}^n \sum_{l=1}^{n-k} \frac{(-1)^{l}}{k(k+l)} \frac{(k + q)_l}{(k)_l} H(k,q) \\ 
    &= \sum_{k = 1}^n \frac{H(k,q)}{k^2} \left(1 +  2 \sum_{l=1}^{n-k} (-1)^{l} \frac{(k + q)_{l}}{(k + 1)_{l}} \right).
\end{align}
We notice that the Pochhammer ratio $\frac{(k + q)_l}{(k + 1)_l}$ may be written as a beta-integral:
\[
    \frac{(k+q)_l}{(k+1)_l} =  I(k, q) \cdot \int_0^1 u^{l+k+q-1} (1-u)^{-q} \, du,
\]
where $I(k, q)$ is as in \eqref{eq:I-defn}. This representation is valid for $l = 0$, as long as we interpret the left-hand side as $1$.
Using these observations, we write
\begin{align*}
    &1 + 2\sum_{l = 1}^{n - k} (-1)^{l} \frac{(k + q)_{l}}{(k + 1)_{l}} \\
    &= 2\sum_{l = 0}^{n - k} (-1)^{l} \frac{(k + q)_{l}}{(k + 1)_{l}} - 1 \\
    &= 2 I(k, q) \cdot \sum_{l = 0}^{n - k}  (-1)^l \int_0^1 u^{l+k+q-1} (1-u)^{-q} \, du - 1 \\
    &= 2 I(k, q) \cdot \int_0^1 u^{k+q-1} (1-u)^{-q} \sum_{l = 0}^{n - k}  (-u)^l \, du - 1\\
    &= 2 I(k, q) \cdot \int_0^1 u^{k+q-1} (1-u)^{-q} \frac{1 - (-u)^{n - k + 1}}{1 + u}\, du - I(k, q)  \cdot \int_0^1 u^{k+q-1} (1-u)^{-q} \, du \\
    &= 2 I(k, q) \cdot \int_0^1 u^{k+q-1} (1-u)^{-q} \bigg( \frac{1}{1 + u} - \frac{1}{2}\bigg) \, du + 2 (-1)^{n - k} I(k, q) \cdot \int_0^1 \frac{u^{n + q} (1 - u)^{-q}}{1 + u}\, du \\
    &= I(k, q) \cdot \int_0^1 \frac{u^{k+q-1} (1-u)^{1 - q}}{1 + u} \, du + 2 (-1)^{n - k} I(k, q) \cdot \int_0^1 \frac{u^{n + q} (1 - u)^{-q}}{1 + u}\, du.
\end{align*}
Plugging this into \eqref{eq:var-Ztilde}, we obtain the desired breakdown \eqref{eq:breakdown-into-T1-T2}.
\end{proof}

\begin{proof}[Proof of Lemma~\ref{lem:T2-limit}]
Observe that
\begin{equation}\label{eq:est-T2-integral}
    \int_0^1 \frac{u^{n + q} (1 - u)^{-q}}{1 + u}\, du \le \int_0^1 u^{n + q} (1 - u)^{-q}\, du = \frac{\Gamma(n + q + 1) \Gamma(1 - q)}{\Gamma(n + 2)} = O(n^{-(1 - q)}).
\end{equation}
Recalling the growth rate of $H(k, q)$, we have that
\begin{equation}\label{eq:a_k}
    a_k := \frac{H(k, q) I(k, q)}{k^2} \asymp \begin{cases}
        k^{-q} & \text{if } q \in [-1, 1/2), \\
        k^{-q} \log k & \text{if } q = 1/2, \\
        k^{-(1 - q)} & \text{if } q \in (1/2, 1).
    \end{cases}
\end{equation}
Let $D_j = \sum_{i = 1}^j (-1)^i$. Then by summation by parts
\begin{align}\label{eq:summation-by-parts} \nonumber
    |\sum_{k = 1}^n (-1)^k a_k| &= |D_n a_n - \sum_{k = 1}^{n - 1} D_k (a_{k + 1} - a_k)| \\
    &\le a_n + \sum_{k = 1}^{n - 1} |a_{k + 1} - a_k|.
\end{align}
We shall now obtain explicit upper bounds on $|a_{k+1}-a_k|$.

First suppose that $q \ne \frac{1}{2}$. Writing
\[
    a_k = C_{1, q} \cdot \frac{\Gamma(k+2q)}{k\,\Gamma(k+q)}
    - C_{2, q} \cdot \frac{\Gamma(k)}{\Gamma(k+q)}
    =: C_{1, q} \cdot b_k - C_{2, q} \cdot c_k,
\]
with $C_{1, q} = \frac{1}{(2q-1)\Gamma(2q)\Gamma(1-q)}$ and $C_{2, q} = \frac{1}{(2q-1)\Gamma(1-q)}$,
we estimate the differences of $b_k$ and $c_k$ separately.

Notice first that
\begin{align*}
    b_{k+1} - b_k &= \frac{\Gamma(k+1+2q)}{(k+1)\Gamma(k+1+q)} - \frac{\Gamma(k+2q)}{k\,\Gamma(k+q)} \\
    &= \frac{\Gamma(k+2q)}{\Gamma(k+q)}\left(\frac{k+2q}{(k+1)(k+q)} - \frac{1}{k}\right) \\
    &= \frac{\Gamma(k+2q)}{\Gamma(k+q)}\cdot \frac{k(k+2q) - (k+1)(k+q)}{k(k+1)(k+q)} \\
    &= \frac{\Gamma(k+2q)}{\Gamma(k+q)}\cdot\frac{k(q-1) - q}{k(k+1)(k+q)}.
\end{align*}
Thus
\begin{equation}\label{eq:diff-bk}
    |b_{k + 1} - b_k| = O(k^{q-2}).
\end{equation}
On the other hand,
\[
    c_{k+1} - c_k = \frac{\Gamma(k+1)}{\Gamma(k+1+q)} - \frac{\Gamma(k)}{\Gamma(k+q)}
    = \frac{\Gamma(k)}{\Gamma(k+q)}\left(\frac{k}{k+q} - 1\right)
    = -\frac{q\,\Gamma(k)}{(k+q)\,\Gamma(k+q)}.
\]
Thus
\begin{equation}\label{eq:diff-ck}
    |c_{k + 1} - c_k| = O(k^{-q - 1}).
\end{equation}
Together, \eqref{eq:diff-bk} and \eqref{eq:diff-ck} give
\begin{equation}\label{eq:a_k-diff}
    |a_{k+1}-a_k| = O(k^{q-2}) + O(k^{-q-1}).
\end{equation}

When $q = \frac{1}{2}$, we have
\[
    a_k = \frac{\Gamma(k)}{\Gamma(k + \frac{1}{2}) \Gamma(\frac{1}{2})} \sum_{j = 1}^{k} \frac{1}{j},
\]
so that
\begin{align}\label{eq:a_k-diff-1/2}\nonumber
    |a_{k + 1} - a_{k}| &= \bigg|\frac{\Gamma(k + 1)}{\Gamma(k + \frac{3}{2}) \Gamma(\frac{1}{2})} \sum_{j = 1}^{k + 1} \frac{1}{j} -\frac{\Gamma(k)}{\Gamma(k + \frac{1}{2}) \Gamma(\frac{1}{2})} \sum_{j = 1}^{k} \frac{1}{j}\bigg| \\ \nonumber
    &=\frac{\Gamma(k + 1)}{\Gamma(k + \frac{3}{2}) \Gamma(\frac{1}{2})} \frac{1}{k + 1} + \frac{\Gamma(k)}{\Gamma(k + \frac{1}{2}) \Gamma(\frac{1}{2})} \bigg(1 - \frac{k}{k + \frac{1}{2}} \bigg)\sum_{j = 1}^{k} \frac{1}{j} \\
    &=O(k^{-\frac{3}{2}} \log k).
\end{align}

Summing the estimates \eqref{eq:a_k-diff} or \eqref{eq:a_k-diff-1/2} over $k$, we see that
\[
    \sum_{k=1}^{n-1}|a_{k+1}-a_k| =
    \begin{cases}
        O(n^{-q})  & \text{if } q \in [-1,\,0), \\
        O(\log n)  & \text{if } q  = 0, \\
        O(1)       & \text{if } q \in (0,\,1).
    \end{cases}
\]
Plugging these estimates and the ones in \eqref{eq:a_k} into \eqref{eq:summation-by-parts}, we obtain
\begin{equation}\label{eq:a_k-alternating-sum}
    \left|\sum_{k=1}^n(-1)^k a_k\right| =
    \begin{cases}
        O(n^{-q})  & \text{if } q \in [-1,\, 0), \\
        O(\log n)  & \text{if } q = 0, \\
        O(1)       & \text{if } q \in (0, 1).
    \end{cases}
\end{equation}
Combining \eqref{eq:est-T2-integral} and \eqref{eq:a_k-alternating-sum}, we obtain the stated estimates \eqref{eq:T2_rates} on $|T_{2, n, q}|$.
\end{proof}

\begin{proof}[Proof of Lemma~\ref{lem:T1-limit}]
By monotone convergence,
\[
    \lim_{n \to \infty} T_{1, n, q} = \sum_{k = 1}^{\infty} \frac{H(k,q) I(k, q)}{k^2} \cdot  \int_0^1 \frac{u^{k+q-1} (1-u)^{1 - q}}{1 + u} \, du.
\]
Since the quantity inside the sum and the integral is non-negative, we can interchange the sum and the integral to get
\begin{equation}\label{eq:T1-limit-expression-1}
    \lim_{n \to \infty} T_{1, n, q} = \frac{1}{\Gamma(1 - q)}\int_0^1  \frac{(1 - u)^{1 - q}}{1 + u} \sum_{k = 1}^\infty \frac{H(k, q)}{k^2} \cdot \frac{\Gamma(k + 1)}{\Gamma(k + q)} \cdot u^{k + q - 1} \, du.
\end{equation}
Let us now simplify the inner sum in \eqref{eq:T1-limit-expression-1}. For $q \ne 1/2$, from \eqref{eq:H_defn}, we get 
\begin{align*}
    &\sum_{k = 1}^\infty \frac{H(k, q)}{k^2} \cdot \frac{\Gamma(k + 1)}{\Gamma(k + q)} \cdot u^{k + q - 1}\\
    &= \frac{u^{q-1}}{2q - 1} \sum_{k = 1}^\infty \left(\frac{\Gamma(k + 2q)}{\Gamma(k + 1) \Gamma(2q)} - 1\right) \cdot \frac{\Gamma(k + 1)}{\Gamma(k + q)} \cdot \frac{u^{k}}{k}\\
    &= \frac{u^{q-1}}{2q - 1} \sum_{k = 1}^\infty \left(\frac{\Gamma(k + 2q)}{\Gamma(k + 1) \Gamma(2q)} - 1\right) \cdot \frac{\Gamma(k + 1)}{\Gamma(k + q)} \cdot \int_{0}^u x^{k - 1} \, dx\\
    &= \frac{u^{q-1}}{2q - 1} \int_0^u \bigg(\sum_{k = 1}^\infty \frac{\Gamma(k + 2q)}{\Gamma(k + q) \Gamma(2q)} x^{k - 1} - \sum_{k = 1}^\infty \frac{\Gamma(k + 1)}{\Gamma(k + q)} x^{k - 1} \bigg) \, dx.
\end{align*}
In the rest of the proof we shall crucially use the hypothesis $q > 0$. Notice that
\begin{align*}
    \sum_{k = 1}^\infty \frac{\Gamma(k + 2q)}{\Gamma(k + q) \Gamma(2q)} x^{k - 1} &= \sum_{k = 0}^\infty \frac{\Gamma(k + 1 + 2q)}{\Gamma(k + 1 + q) \Gamma(2q)} x^{k} \\
    &= \frac{2q}{\Gamma(q + 1)} \, \gausshyp{2q + 1}{1}{q + 1}{x} \\
    &= \frac{2q}{\Gamma(q + 1)} \cdot \frac{\Gamma(q+1)}{\Gamma(q)}\int_0^1 (1-t)^{q-1} (1-xt)^{-2q-1} \, dt \\
    &= \frac{2q}{\Gamma(q)} \int_0^1 (1-t)^{q-1} (1-xt)^{-2q-1} \, dt,
\end{align*}
where in the penultimate line we have used Euler's integral representation \eqref{eq:euler-representation} (notice that here we need $q > 0$).
Similarly,
\begin{align*}
    \sum_{k = 1}^\infty \frac{\Gamma(k + 1)}{\Gamma(k + q)} x^{k - 1} &= \frac{1}{\Gamma(q + 1)} \, \gausshyp{2}{1}{q + 1}{x} \\
    &=\frac{1}{\Gamma(q + 1)} \cdot \frac{\Gamma(q+1)}{\Gamma(q)} \int_0^1 (1-t)^{q-1} (1-xt)^{-2} \, dt \\
    &= \frac{1}{\Gamma(q)} \int_0^1 (1-t)^{q-1} (1-xt)^{-2} \, dt.
\end{align*}
Hence
\begin{align*}
    &\sum_{k = 1}^\infty \frac{H(k, q)}{k^2} \cdot \frac{\Gamma(k + 1)}{\Gamma(k + q)} \cdot u^{k + q - 1} \\
    &= \frac{u^{q-1}}{(2q - 1) \Gamma(q)} \int_0^u \int_0^1 \bigg[2q (1-t)^{q-1} (1-xt)^{-2q-1} - (1-t)^{q-1} (1-xt)^{-2} \bigg] \, dt \, dx \\
    &= \frac{u^{q-1}}{(2q - 1) \Gamma(q)} \int_0^1 (1-t)^{q-1} \int_0^u \bigg[2q (1-xt)^{-2q-1} - (1-xt)^{-2} \bigg] \, dx \, dt \\
    &= \frac{u^{q-1}}{(2q - 1) \Gamma(q)} \int_0^1 \frac{(1-t)^{q-1}}{t}\left[(1-ut)^{-2q}-(1-ut)^{-1}\right] \, dt.
\end{align*}
In the above computation, the interchange in the third line is justified by dominated convergence as $q > 0$. Thus, we have shown that
\begin{align*}
    &\lim_{n \to \infty} T_{1, n, q} \\
    &= \frac{1}{\Gamma(1 - q)}\int_0^1  \frac{(1 - u)^{1 - q}}{1 + u} \sum_{k = 1}^\infty \frac{H(k, q)}{k^2} \cdot \frac{\Gamma(k + 1)}{\Gamma(k + q)} \cdot u^{k + q - 1} \, du \\
    &= \frac{1}{\Gamma(1 - q)}\int_0^1  \frac{(1 - u)^{1 - q}}{1 + u} \cdot \frac{u^{q-1}}{(2q - 1) \Gamma(q)} \int_0^1 \frac{(1-t)^{q-1}}{t}\left[(1-ut)^{-2q}-(1-ut)^{-1}\right] \, dt \, du \\
    &= \frac{1}{(2q - 1) \Gamma(q)\Gamma(1 - q)}\int_0^1  \frac{u^{q-1}(1 - u)^{1 - q}}{1 + u} \int_0^1 \frac{(1-t)^{q-1}}{t}\left[(1-ut)^{-2q}-(1-ut)^{-1}\right] \, dt \, du.
\end{align*}
With the substitution $v = ut$, the inner integral can be rewritten as follows:
\[
    \int_0^1 \frac{(1-t)^{q-1}}{t}\left[(1-ut)^{-2q}-(1-ut)^{-1}\right] \, dt = u^{1-q}\int_0^u \frac{(u-v)^{q-1}}{v}
    \left[(1-v)^{-2q}-(1-v)^{-1}\right]dv.
\]
Therefore
\begin{align}\label{eq:lim-T1-pre-final} \nonumber
    &\lim_{n \to \infty} T_{1, n, q} \\ \nonumber
    &= \frac{1}{(2q-1)\Gamma(q)\Gamma(1-q)}
    \int_0^1 \frac{(1-u)^{1-q}}{1+u}
    \int_0^u \frac{(u-v)^{q-1}}{v}
    \left[(1-v)^{-2q}-(1-v)^{-1}\right]dv\,du \\
    &= 
    \frac{1}{(2q-1)\Gamma(q)\Gamma(1-q)} \int_0^1 \frac{(1-v)^{-2q}-(1-v)^{-1}}{v} \int_v^1 \frac{(1-u)^{1-q}(u-v)^{q-1}}{1+u}\,du\,dv,
\end{align}
where the second equality follows from a 
swapping of the order of integration. Finally, the substitution
$u = v + (1 - v) s$ helps us evaluate the inner integral. Indeed, letting $\lambda = \frac{1 - v}{1 + v}$, we see that
\begin{align*}
    \int_v^1 \frac{(1-u)^{1-q}(u-v)^{q-1}}{1+u}\,du &= (1 -v)\int_0^1\frac{s^{q-1}(1-s)^{1-q}}{1 + v + (1-v)s}\,ds \\
    &= \lambda \int_0^1\frac{s^{q-1}(1-s)^{1-q}}{1 + \lambda s}\,ds \\ 
    &= \lambda \cdot \Gamma(q)\Gamma(2-q) \cdot \gausshyp{1}{q}{2}{-\lambda} \\
    &= \Gamma(q)\Gamma(1-q) \cdot (1 - q) \lambda \cdot \gausshyp{q}{1}{2}{-\lambda} \\
    &= \Gamma(q)\Gamma(1-q) \cdot (1 - q)\lambda \cdot \int_0^1 (1 + \lambda s)^{-q} \, ds\\
    &= \Gamma(q)\Gamma(1-q) \cdot \left[(1 + \lambda)^{1 - q} - 1\right]\\
    &= \Gamma(q)\Gamma(1-q) \left[\left(\frac{2}{1+v}\right)^{1-q}-1\right],
\end{align*}
where the second and the fourth equalities follow from Euler's integral representation \eqref{eq:euler-representation}. Substituting this into \eqref{eq:lim-T1-pre-final}, we get
\begin{align*}
    \lim_{n \to \infty} T_{1, n, q} = \frac{1}{2q-1}\int_0^1\frac{(1-v)^{-2q}-(1-v)^{-1}}{v} \left[\left(\frac{2}{1+v}\right)^{1-q}-1\right] \, dv.
\end{align*}
Finally, the change of variable $u = 1 - v$ yields
\[
    \lim_{n \to \infty} T_{1, n, q} = \frac{1}{2q-1}\int_0^1\frac{u^{-2q + 1} - 1}{1 - u} \cdot \frac{(1 - \frac{u}{2})^{q - 1} - 1}{u} \, du.
\]
Finally, we derive the limit at $q = \frac{1}{2}$. For $k \ge 1$, let $h_k := \sum_{i=1}^k \frac{1}{i}$ denote the $k$-th harmonic number. Note that for $|x| <1$, the generating function of the sequence $(h_k)_{k \ge 1}$ is given by
\begin{equation}\label{eq:harmonic_generating_function}
    \sum_{k=1}^\infty h_k x^k = - \frac{\ln (1-x)}{1-x}.
\end{equation}
Indeed, for $|x| < 1$,
\begin{align*}
    \sum_{k=1}^\infty h_k x^k &= \sum_{k=1}^\infty \sum_{i=1}^k \frac{1}{i} x^k = \sum_{i=1}^\infty \frac{1}{i} \sum_{k=i}^\infty x^k = \frac{1}{1-x} \sum_{i=1}^\infty \frac{x^i}{i} = - \frac{\ln (1-x)}{1-x}.
\end{align*}
Then,
\begin{align*}
    &\lim_{n \to \infty} T_{1, n, \frac{1}{2}} \\
    &= \frac{1}{\Gamma(\frac{1}{2})}\int_0^1  \frac{(1 - u)^{\frac{1}{2}}}{1 + u} \sum_{k = 1}^\infty \frac{H(k, \frac{1}{2})}{k^2} \cdot \frac{\Gamma(k + 1)}{\Gamma(k + \frac{1}{2})} \cdot u^{k + \frac{1}{2} - 1} \, du \\
    &= \frac{1}{\Gamma(\frac{1}{2})}\int_0^1  \frac{(1 - u)^{\frac{1}{2}}}{1 + u} \sum_{k = 1}^\infty \frac{\Gamma(k)}{\Gamma(k + \frac{1}{2})}h_k \cdot u^{k - \frac{1}{2}} \, du\\
    &= \frac{1}{\Gamma^2(\frac{1}{2})}\int_0^1  \frac{(1 - u)^{\frac{1}{2}}}{1 + u} \sum_{k = 1}^\infty \left(\int_0^1 t^{k-1} (1-t)^{-\frac{1}{2}} \, dt\right) h_k \cdot u^{k - \frac{1}{2}} \, du\\
    &= \frac{1}{\Gamma^2(\frac{1}{2})}\int_0^1 \int_0^1 \frac{(1 - u)^{\frac{1}{2}}}{(1 + u) u^{\frac{1}{2}} (1- t)^{\frac{1}{2}} t} \sum_{k = 1}^\infty h_k (ut)^k \, dt \, du \\
    &= \frac{1}{\Gamma^2(\frac{1}{2})}\int_0^1 \int_0^1 \frac{(1 - u)^{\frac{1}{2}}}{(1 + u) u^{\frac{1}{2}} (1-t)^{\frac{1}{2}} t} \left( - \frac{\ln(1-ut)}{1-ut} \right) \, dt \, du \quad (\text{using}\,\, \eqref{eq:harmonic_generating_function}) \\
    &= - \frac{1}{\Gamma^2(\frac{1}{2})}\int_0^1 \int_0^u \frac{(1 - u)^{\frac{1}{2}} \ln(1-v)}{(1 + u) (u - v)^{\frac{1}{2}} v(1-v)} \, dv \, du  \quad (\text{substituting}\,\, v = ut) \\ 
    &= - \frac{1}{\Gamma^2(\frac{1}{2})}\int_0^1 \frac{\ln(1-v)}{v(1-v)} \int_v^1 \frac{(1 - u)^{\frac{1}{2}}}{(1 + u) (u - v)^{\frac{1}{2}}} \, du \, dv \\
    &= - \frac{1}{\Gamma^2(\frac{1}{2})}\int_0^1 \frac{\ln(1-v)}{v(1 + v)} \int_0^1 \frac{(1 - s)^{\frac{1}{2}}}{(1 + \lambda s) s^{\frac{1}{2}}} \, ds \, dw \quad (\text{substituting}\,\, s=\frac{u-v}{1-v} \,\, \text{and letting}\,\, \lambda:= \frac{1-v}{1+v})\\
    &= - \frac{1}{\Gamma^2(\frac{1}{2})}\int_0^1 \frac{\ln(1-v)}{v(1 + v)} \cdot \frac{\Gamma(\frac{1}{2}) \Gamma(\frac{3}{2})}{\Gamma(2)} \, \gausshyp{1}{\frac{1}{2}}{2}{-\lambda} \, dv \quad (\text{using Euler's integral representation}) \\
    &= - \frac{1}{2}\int_0^1 \frac{\ln(1-v)}{v(1 + v)} \, \gausshyp{\frac{1}{2}}{1}{2}{-\lambda} \, dv \\
    &= - \frac{1}{2} \int_0^1 \frac{\ln(1-v)}{v(1 + v)} \int_0^1 \frac{1}{(1+\lambda s)^{\frac{1}{2}}} \, ds \, dv  \quad (\text{using Euler's integral representation}) \\
    &= - \int_0^1 \frac{\ln(1-v)}{v(1 + v)} \cdot \frac{1}{\lambda} \left((1 + \lambda)^{\frac{1}{2}} - 1\right) \, dw \\
    &= - \int_0^1 \frac{\ln(1-v)}{v(1-v)} \left( \sqrt{\frac{2}{1+v}} -1\right) \, dv \\
    &= \int_0^1 \frac{ - \ln{u}}{(1-u)} \cdot \frac{\left(1-\frac{u}{2}\right)^{-\frac{1}{2}}-1}{u} \, du \quad (\text{substituting}\,\, u = 1 - v).
\end{align*}
This completes the proof of \eqref{eq:T_1_limit} for $q=\frac{1}{2}$, finishing the proof of Lemma~\ref{lem:T1-limit}.
\end{proof}
\begin{remark} 
The following heuristic computation also produces the same answer for $q = \frac{1}{2}$; however, we are unable to justify the change of limit and integral. We have from \eqref{eq:T1-limit-expression-1},
\begin{align*}
    \lim_{n \to \infty} T_{1, n, \frac{1}{2}} &= \frac{1}{\Gamma(1/2)}\int_0^1  \frac{(1 - u)^{1/2}}{1 + u} \sum_{k = 1}^\infty \frac{H(k, 1/2)}{k^2} \cdot \frac{\Gamma(k + 1)}{\Gamma(k + 1/2)} \cdot u^{k + 1/2 - 1} \, du \\
    &= \lim_{q \to \frac{1}{2}-} \frac{1}{\Gamma(1 - q)}\int_0^1  \frac{(1 - u)^{1 - q}}{1 + u} \sum_{k = 1}^\infty \frac{H(k, q)}{k^2} \cdot \frac{\Gamma(k + 1)}{\Gamma(k + q)} \cdot u^{k + q - 1} \, du \\
    &= \lim_{q \to \frac{1}{2}-}\frac{1}{2q-1}\int_0^1\frac{u^{-2q + 1} - 1}{1 - u} \cdot \frac{(1 - \frac{u}{2})^{q - 1} - 1}{u} \, du \\
    &= \frac{1}{2} \int_0^1 \frac{\partial}{\partial q} \bigg[\frac{u^{-2q + 1} - 1}{1 - u} \cdot \frac{(1 - \frac{u}{2})^{q - 1} - 1}{u} \bigg]\bigg|_{q = 1/2} \, du \\
    &= \int_0^1 \frac{-\ln u}{1-u}\cdot\frac{\left(1-\frac{u}{2}\right)^{-1/2}-1}{u}\,du.
\end{align*} 
\end{remark}

\begin{proof}[Proof of Lemma~\ref{lem:T1-unif-conv}]
For any $m \ge 1$, we have
\begin{align}\label{eq:unif-conv-tail-bound} \nonumber
    \bigg|\sum_{k = m}^\infty &\frac{H(k, q) I(k, q)}{k^2} \cdot \int_0^1 \frac{u^{k+q-1} (1-u)^{1 - q}}{1 + u} \, du\bigg| \\ \nonumber
    &\le \sum_{k = m}^\infty \frac{|H(k, q) I(k, q)|}{k^2} \cdot \int_0^1 u^{k+\Re(q)-1} (1-u)^{1 - \Re(q)} \, du \\ \nonumber
    &= \sum_{k = m}^\infty \frac{|H(k, q) I(k, q)|}{k^2} \cdot \frac{\Gamma(k+\Re(q)) \Gamma(2-\Re(q))}{\Gamma(k + 2)} \\ \nonumber
    &= \sum_{k = m}^\infty \frac{|H(k,q)|}{k^2} \cdot \frac{\Gamma(k+1)}{|\Gamma(k+q)| |\Gamma(1-q)|} \cdot \frac{\Gamma(k+\Re(q)) \Gamma(2-\Re(q))}{\Gamma(k + 2)} \\
    &= \frac{\Gamma(2-\Re(q))}{|\Gamma(1-q)|} \sum_{k = m}^\infty \frac{|H(k,q)|}{k^2(k+1)} \cdot \frac{\Gamma(k+\Re(q))}{|\Gamma(k+q)|}.
\end{align}
Using the inequality $|\Gamma(z)| \le \Gamma(\Re(z))$, valid for $\Re(z) > 0$, we note that for any $q \in R_{\eta}$ and $k \ge 3$,
\[
    \Gamma(k + 2q)| \le \Gamma(k + 2 \Re(q)) \le \Gamma(k + 1),
\]
so that
\begin{equation}\label{eq:H-bdd}
    |H(k, q)| \le \frac{k}{|1 - 2q|} \bigg(\frac{|\Gamma(k + 2q)|}{\Gamma(k + 1)|\Gamma(2q)|} + 1\bigg) \le \frac{k}{|1 - 2q|} \bigg(\frac{1}{|\Gamma(2q)|} + 1\bigg) \le C_{\eta} k,
\end{equation}
where $C_\eta := \sup_{q \in R_{\eta}} \frac{1}{|1 - 2q|} \big(\frac{1}{|\Gamma(2q)|} + 1 \big) < \infty$.

On the other hand, using the well-known lower bound (see, e.g., \cite[Section~5.6]{olver2010nist})
\[
    |\Gamma(z)| \ge \frac{\Gamma(\Re(z))} {\sqrt{\cosh(\pi \Im(z))}},
\]
valid for $\Re(z) \ge \frac{1}{2}$, we have for any $q \in R_{\eta}$ and $k \ge 2$,
\begin{equation}\label{eq:gamma-ratio-bdd}
    \frac{\Gamma(k + \Re(q))}{|\Gamma(k + q)|} \le \sqrt{\cosh(\pi)}.
\end{equation}
Plugging in the bounds \eqref{eq:H-bdd} and \eqref{eq:gamma-ratio-bdd}, we see from \eqref{eq:unif-conv-tail-bound} that for all $m \ge 3$,
\[
    \bigg|\sum_{k = m}^\infty \frac{H(k, q) I(k, q)}{k^2} \cdot \int_0^1 \frac{u^{k+q-1} (1-u)^{1 - q}}{1 + u} \, du\bigg| \le C'_\eta \sum_{k = m}^\infty \frac{1}{k^2},
\]
for some constant $C'_{\eta} > 0$, which does not depend on $q$. This establishes the desired uniform convergence.
\end{proof}

\section*{Acknowledgements}
The research of SSM was partially supported by the Prime Minister Early Career Research Grant ANRF/ECRG/2024/006704/PMS from the Anusandhan National Research Foundation, Govt.~of India.

\bibliographystyle{alpha}
\bibliography{refs.bib}

\appendix
\section{Auxiliary results}\label{sec:aux}
\begin{lemma}\label{lem:phi_holomorphic}
Fix $\eta \in (0,\frac{1}{2})$. Define the function $\psi: R_\eta \to \bbC$, given by
\begin{equation}\label{eq:phi_defn}
    \psi(z):= \frac{1}{2z-1}\int_0^1\frac{u^{-2z + 1} - 1}{1 - u} \cdot \frac{(1 - \frac{u}{2})^{z - 1} - 1}{u} du.
\end{equation}
Then, $\phi$ is holomorphic on $R_\eta$.
\end{lemma}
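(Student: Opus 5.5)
The plan is to show that $\psi$ (called $\phi$ in the statement) is an integral of a function $f(z,u)$ that is holomorphic in $z$ for each fixed $u\in(0,1)$, and that $f$ admits an integrable majorant uniform over $z \in R_\eta$. Holomorphy of the integral then follows by the standard theorem on parameter-dependent integrals: continuity plus Morera with Fubini, or equivalently differentiation under the integral sign. The prefactor $\frac{1}{2z-1}$ needs separate treatment, since $R_\eta$ contains no point with $\Re z = \frac12$ (indeed $\Re z < \frac12-\eta$). So $\frac{1}{2z-1}$ is holomorphic on $R_\eta$ and bounded by $1/(2\eta)$ there.

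Set
\[
f(z,u) := \frac{u^{-2z+1}-1}{1-u}\cdot\frac{(1-\frac u2)^{z-1}-1}{u}, \qquad u\in(0,1),
\]
with $u^{w}=e^{w\ln u}$ and $(1-\frac u2)^{w}=e^{w\ln(1-u/2)}$. Both are entire in $z$, so $f(\cdot,u)$ is entire and jointly continuous.

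For the majorant, write $x=\Re z \in(-1-\eta,\frac12-\eta)$ and $|\Im z|<1$, and bound each factor.

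\emph{Second factor.} Use $|e^{w}-1|\le |w|e^{|w|}$ with $w=(z-1)\ln(1-\frac u2)$. Since $|\ln(1-\frac u2)|\le \ln 2$ and $|\ln(1-\frac u2)| \le C u$ on $[0,1]$, and $|z-1|\le 4$ on $R_\eta$, we get $\bigl|(1-\frac u2)^{z-1}-1\bigr|/u \le C'$ uniformly.

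\emph{First factor near $u=1$.} Use the same inequality with $w=(1-2z)\ln u$. Since $|\ln u|\le 2(1-u)$ for $u\in[\frac12,1]$ and $|1-2z|$ is bounded, this factor is bounded there.

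\emph{First factor near $u=0$.} Here $|u^{-2z+1}-1|\le u^{1-2x}+1 \le u^{2\eta}+1\le 2$, because $1-2x>2\eta>0$. Together with $\frac{1}{1-u}\le 2$, the factor is bounded on $(0,\frac12]$.

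Hence $|f(z,u)|\le M_\eta$ on $R_\eta\times(0,1)$, and a constant is integrable on $(0,1)$. The only delicate point is the behaviour at $u=0$, and it is exactly here that the constraint $\Re z<\frac12-\eta$ is used.

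To conclude, the function $F(z)=\int_0^1 f(z,u)\,du$ is continuous on $R_\eta$ by dominated convergence. For any closed triangle $\Delta\subset R_\eta$, Fubini (justified by the uniform bound) gives
\[
\oint_{\partial\Delta}F\,dz=\int_0^1\oint_{\partial\Delta}f(z,u)\,dz\,du=0,
\]
the inner integral vanishing by Cauchy–Goursat. So $F$ is holomorphic by Morera's theorem, and therefore $\psi = F/(2z-1)$ is holomorphic on $R_\eta$. I expect no real obstacle beyond checking the uniform bound near $u=0$.
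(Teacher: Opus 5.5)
Your proof is correct and follows essentially the same route as the paper. Both arguments rest on the fact that an integral over $[0,1]$ of a function holomorphic in $z$ and suitably regular in $(z,u)$ is holomorphic: the paper extends the integrand (including the factor $\frac{1}{2z-1}$) continuously to $R_\eta\times[0,1]$ and cites Stein--Shakarchi's Theorem~5.4, while you prove the same conclusion directly from your uniform bound via dominated convergence, Fubini and Morera. Your version does not need the exact endpoint values, and this helps, since the paper's stated value $\Psi(z,1)=1$ is correct only at $z=0$ (the true limit is $2^{1-z}-1$); a continuous extension still exists, so the paper's argument is unaffected.
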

\begin{remark}
    Note that the integral exists and is finite since the integrand has finite limits as $u \to 0$ and $1$.
\end{remark}
\begin{proof}
Define $\Psi : R_\eta \times [0,1] \to \bbC$ by
\begin{equation}\label{eq:Phi_defn}
    \Phi(z, u) := 
    \begin{cases} 
        \frac{1}{2z-1} \cdot \frac{u^{-2z + 1} - 1}{1 - u} \cdot \frac{(1 - \frac{u}{2})^{z - 1} - 1}{u} &\text{if}\,\, u \in (0,1),\\
        \frac{z-1}{2(2z-1)}  &\text{if}\,\, u=0,\\
        1 &\text{if}\,\, u=1.
    \end{cases}
\end{equation}
Note that $\Psi$ is continuous on $R_\eta \times [0,1]$. Moreover, for each fixed $u \in [0,1]$, $\Psi$ is holomorphic on $R_\eta$. Also, note that
\[
    \psi(z) = \int_0^1 \Psi(z,u) \, du.        
\]
Now a direct application of Theorem~5.4 of \cite[Chapter 2]{stein2003complex} proves the holomorphicity of $\psi$.
\end{proof}

\end{document}